\documentclass[12pt,reqno]{amsart}
\usepackage{amssymb}
\usepackage{mathrsfs}\overfullrule=5pt
\usepackage{amsmath,amssymb,graphicx}
\usepackage[colorlinks=true,urlcolor=red, citecolor=blue]{hyperref}
\usepackage{xcolor}
\topmargin=0pt \oddsidemargin=0pt \evensidemargin=0pt
\textwidth=15cm \textheight=22cm \raggedbottom

\makeatletter

\newcommand{\Rmnum}[1]{\expandafter\@slowromancap\romannumeral #1@}
\makeatother

\newtheorem{thm}{Theorem}[section]
\newtheorem{lem}[thm]{Lemma}

\newtheorem{prop}[thm]{Proposition}

\theoremstyle{definition}

\theoremstyle{remark}
\newtheorem{rem}[thm]{Remark}

\DeclareMathOperator{\supp}{supp}
\numberwithin{equation}{section}

\newcommand{\Z}{\mathbb{Z}}
\newcommand{\N}{\mathbb{N}}
\newcommand{\R}{\mathbb{R}}
\newcommand{\p}{\mathbb{P}}
\begin{document}
\title[Mean Li-Yorke chaos along polynomials and prime numbes]{Mean Li-Yorke chaos along polynomials of several variables and prime numbers}
\author[K. Liu]{Kairan Liu}
\address{Kairan Liu:
Department of Mathematics, University of Science and Technology of China, Hefei, Anhui 230026, China}
\email{lkr111@mail.ustc.edu.cn}

\subjclass[2010]{37B05, 54H20}
\keywords{Entropy, weakly mixing}

\date{\today}

\begin{abstract}
In this paper, for any given polynomial, by analyzing the limiting behavior of ergodic averages along polynomials of several variables and prime numbers, we prove that for a topology dynamical system, positive entropy implies mean Li-Yoke chaos along non-constant polynomials of several variables and prime numbers.
\end{abstract}

\maketitle

\section{Introduction}
As an important concept representing complexity of a dynamical system, the notion of chaos was first introduced by Li and Yorke in 1975 (\cite{Li-Yorke}), and has attracted a lot of attention. Other different versions of chaos, such that Devaney chaos, positive entropy and weakly mixing (see for example \cite{Devaney,Adler-Konheim-McAndrew,Furstenberg} for details) were proposed over the past few decades. The implication among those chaos became a central topic as well.

In 1991, Iwanik showed that weak mixing implies Li-Yorke chaos (\cite{Iwanik}). Latter, Huang and Ye (\cite{Huang-Ye}) proved that Devaney chaos implies Li-Yorke one. In the same year, it was showed by Blanchard et al. that positive entropy implies Li-Yorke chaos (see \cite{Blanchard-Glasner-Kolyada}) and for amenable group case see \cite{Huang-Xu-Yi,Kerr-Li,Wang-Zhang}. Moreover, the result also holds for sofic group actions by Kerr and Li (\cite{Kerr-Li2}). In \cite{Downarowicz}, Downarowicz proved that positive topological entropy implies mean Li-Yorke chaos, see \cite{Huang-Li-Ye-Zhou} for another approach. See a recent survey \cite{Li-Ye} and references therein for more results and details.

Ergodic theory has a long history of interaction with other mathematical fields and in particular with combinatorics and number theory. The seminal work of H. Furstenberg \cite{Furstenberg2}, where an ergodic proof of the theorem of Szemer\'edi \cite{Szemeredi} on arithmetic progressions was given, linked problems in ergodic theory, combinatorics, and number theory, and provided an ideal ground for cross-fertilization. Many
researchers were interested in multiple ergodic averages. To get manageable problems, one typically restricts the class of eligible sequences and usually assumes that they are polynomial sequences, sequences arising from smooth functions, sequences related to the
prime numbers, or random sequences of integers. One can see \cite{Rosenblatt-Wierdl} and references therein for more results. In \cite{Leibman}, the author obtained the pointwise ergodic theorem for polynomial actions of $Z^d$ by translations on a nilmanifold.

Inspired by the previous works, our aim in this paper is to investigate relationships between positive entropy and mean Li-Yorke chaos along polynomials of several variables and prime numbers. Precisely, throughout this paper, by a \emph{topological dynamical system} (TDS for short), we mean a pair $(X,T)$, where $X$ is a compact metric space and $T:X\to X$ is a homeomorphism, and by a \emph{probability space}, we mean a triple $(X,\mathscr{B}_X,\mu)$, where $(X,\mathscr{B}_X)$ is a standard Borel space and $\mu$ is a probability measure on $(X,\mathscr{B}_X)$. For a probability space $(X,\mathscr{B}_X,\mu)$, if $T:X\to X$ is an invertible measure preserving transformation, by a \emph{measure preserving system}, we mean the quadruple $(X,\mathscr{B}_X,\mu,T)$.

\begin{thm}\label{mainthem1}Let $s\in\N$, $P:\Z^s\to\Z$ be a non-constant polynomial, $\{\Phi_n\}_{n=1}^{\infty}$ a F$\phi$lner sequence of $\Z^s$ and $(X,T)$ be a TDS with $h_{top}(X,T)>0$. Then there exists a Cantor subset $C$ of $X$, such that for every distinct $x,y\in C$ one has
$$\left\{
\begin{aligned}
\limsup\limits_{N\to\infty}\frac{1}{|\Phi_N|}\sum\limits_{u\in\Phi_N}\rho(T^{P(u)}x,T^{P(u)}y)>0, \\
\liminf\limits_{N\to\infty}\frac{1}{|\Phi_N|}\sum\limits_{u\in\Phi_N}\rho(T^{P(u)}x,T^{P(u)}y)=0.
\end{aligned}
\right.
$$
\end{thm}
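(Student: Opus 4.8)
The plan is to pass to a measure-preserving model, build a self-joining carried by the ``entropy part'' of the system, and then extract the Cantor set by a Mycielski-type argument. First I would invoke the variational principle together with $h_{top}(X,T)>0$ to produce an ergodic $T$-invariant Borel probability measure $\mu$ with $h_\mu(T)>0$; replacing $X$ by $\supp\mu$ I may assume $\mu$ has full support, and since an ergodic measure of positive entropy is non-atomic, $\supp\mu$ is perfect. Let $\mathcal P\subseteq\mathscr B_X$ be the Pinsker $\sigma$-algebra of $(X,\mathscr B_X,\mu,T)$ and let $\lambda=\mu\times_{\mathcal P}\mu$ be the self-joining on $X\times X$ that is relatively independent over $\mathcal P$; it is $\tilde T$-invariant, where $\tilde T=T\times T$. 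Because $h_\mu(T)>0$ the factor $\mathcal P$ is proper and the conditional measures of $\mu$ over $\mathcal P$ are non-atomic, so $\lambda$ gives no mass to the diagonal $\Delta$, and hence $c:=\int_{X\times X}\rho\,d\lambda>0$. Writing $\Theta_N(x,y)=\frac{1}{|\Phi_N|}\sum_{u\in\Phi_N}\rho(T^{P(u)}x,T^{P(u)}y)$, the $\tilde T$-invariance of $\lambda$ gives $\int\Theta_N\,d\lambda=c$ for every $N$.

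The analytic engine is the limiting behaviour of the averages $\frac{1}{|\Phi_N|}\sum_{u\in\Phi_N}f\circ T^{P(u)}$ for $f\in L^2(\mu)$: these converge in $L^2(\mu)$, and the limit is unchanged if $f$ is replaced by $\mathbb E[f\mid\mathcal P]$, because the characteristic factor for a single non-constant polynomial average is a pro-nilfactor, which has zero entropy and is therefore $\mathcal P$-measurable. Feeding this into the joining, the two coordinates decouple through $\mathcal P$ and $\Theta_N$ converges in $L^2(\lambda)$ to $\Theta_\infty:=\mathbb E_\lambda[\rho\mid\mathcal J]$, where $\mathcal J$ is the invariant $\sigma$-algebra of the polynomial $\Z^s$-action; since $\lambda(\Delta)=0$ the conditional measures of $\lambda$ over $\mathcal J$ also avoid $\Delta$ for a.e. pair, so $\Theta_\infty>0$ $\lambda$-almost everywhere. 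Passing to an a.e.\ convergent subsequence gives $\limsup_N\Theta_N\ge\Theta_\infty>0$ $\lambda$-a.e., which establishes the first (upper) inequality on a set of full $\lambda$-measure, hence on a set dense in $\supp\lambda$.

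The hard part will be the second inequality $\liminf_N\Theta_N=0$, which cannot be read off from $\lambda$: since $\int\Theta_N\,d\lambda=c>0$ for all $N$, a $\lambda$-generic pair is mean distal rather than mean proximal (for a $K$-system $\lambda=\mu\times\mu$ and $\Theta_N\to c$), so the mean Li--Yorke pairs form a non-generic, measure-zero family that must be built by hand. Here I would use the local theory of entropy: positive entropy yields a non-diagonal entropy pair $(a,b)$ and, for any neighbourhoods $U\ni a$, $V\ni b$ with $\rho(U,V)>0$, combinatorial independence sets of positive density along which every $\{U,V\}$-pattern is realised. The role of the polynomial ergodic averages is precisely to guarantee that such independence persists \emph{along the sparse sampling times} $\{P(u):u\in\Phi_N\}$, so that on a prescribed block of coordinates one may freely force the two orbits either to agree (both orbits sent into $U$, making $\Theta_N$ small) or to split (one into $U$, one into $V$, keeping $\Theta_N$ bounded below by $\rho(U,V)$ times the sampling density). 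Matching a positive-density independence set against the sparse image $P(\Phi_N)$ is the main obstacle, and it is exactly where the equidistribution and recurrence estimates for polynomials of several variables (and, in the companion statement, for the primes) become indispensable.

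Finally I would organise these prescriptions into a Cantor scheme: along a fixed sequence of scales $N_k\to\infty$ I alternate ``agree'' prescriptions (forcing $\Theta_{N_k}\to0$, hence $\liminf_N\Theta_N=0$) with ``split'' prescriptions (keeping $\limsup_N\Theta_N>0$), carrying out all $2^k$ choices at level $k$ to produce a binary tree of admissible orbit-specifications whose branches, after passing to limits, give a Cantor set $C\subseteq\supp\mu$. The construction is arranged so that any two distinct branches differ on infinitely many ``split'' blocks and agree on infinitely many ``agree'' blocks, giving both inequalities simultaneously for every distinct $x,y\in C$; packaging the resulting symmetric relation as a dense $G_\delta$ subset of $\supp\mu\times\supp\mu$ and invoking the Kuratowski--Mycielski theorem makes the extraction of $C$ rigorous.
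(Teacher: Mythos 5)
Your first half (the $\limsup$ part) is essentially the paper's argument: ergodic $\mu$ with $h_\mu(T)>0$, the relatively independent self-joining $\lambda=\mu\times_{P_\mu(T)}\mu$, the fact that the Pinsker algebra is characteristic for the averages $\frac{1}{|\Phi_N|}\sum_{u\in\Phi_N}T^{P(u)}f$, and $\lambda(\Delta)=0$ forcing the limit of $\Theta_N$ to be positive. Two details you gloss over: the paper proves the characteristic-factor statement directly (via Rokhlin--Sinai and the fact that $\{(u,v):|P(u)-P(v)|\le m\}$ has Banach density zero in $\Z^{2s}$) rather than by appealing to pro-nilfactors, and it needs the limit $\int\rho\,d\tau_{(y_1,y_2)}$ to be \emph{constant} $\mu_z\times\mu_z$-a.e.\ on each Pinsker fibre (Lemma 3.4) in order to extract a single $c>0$ for which $\{\limsup\Theta_N>c\}$ is a dense $G_\delta$ in $\supp(\mu_z)\times\supp(\mu_z)$; density of $\{\limsup\Theta_N>0\}$ alone does not give a residual relation, since that set is only a countable union of $G_\delta$ sets.

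The genuine gap is the $\liminf$ part, and it is exactly the obstacle you flag yourself: a combinatorial independence set of positive density in $\Z$ gives no control along the image $P(\Phi_N)$, which for $\deg P\ge 2$ is a density-zero subset of $\Z$; there is no known (and in general no true) transfer of independence to such sparse sampling times, so the ``agree/split'' Cantor scheme cannot be carried out as described. The paper's route is entirely different and much softer: it takes $B=\{(x_1,x_2):\liminf_N\Theta_N(x_1,x_2)=0\}$ and shows $W^s(x,T)\times W^s(x,T)\subseteq B$, because for $y_1,y_2$ in a common stable set one has $\rho(T^k y_1,T^k y_2)\to0$ as $k\to+\infty$, while the exceptional set $\{u\in\Z^s:P(u)\le K_0\}$ has density zero along the F{\o}lner sequence (each level set of $P$ meets a box $[-n,n]^s$ in $O(n^{s-1})$ points). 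The deep input is then the theorem of Huang--Xu--Yi that $\overline{W^s(x,T)\cap\supp(\mu_x)}=\supp(\mu_x)$ for $\mu$-a.e.\ $x$ in a positive-entropy ergodic system, which makes $B\cap(\supp(\mu_x)\times\supp(\mu_x))$ a dense $G_\delta$; intersecting with the $\limsup$ relation and applying Mycielski finishes the proof. Without this stable-set input (or a worked-out substitute for your independence-along-$P(\Phi_N)$ claim), your argument does not close.
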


For any $N\in \N$, let $\pi(N)$ denote the number of primes less than or equal to $N$. Authors of \cite{Frantzikinakis-Host-Kra}, studied multiple recurrence and convergence for sequences related to the prime numbers. Based on those results, we have the following result.
\begin{thm}\label{mainthm2}Let $s\in\N$, $P:\Z^s\to\Z$ be a non-constant polynomial, and $(X,T)$ be a TDS with $h_{top}(X,T)>0$. Then there exists a Cantor subset $C$ of $X$, such that for every distinct $x,y\in C$ we have
$$\left\{
		\begin{aligned}
		\limsup\limits_{N\to\infty}\frac{1}{\pi(N)^s}\sum\limits_{\mbox{\tiny$\begin{array}{c}
1\leq p_1,\dotsc, p_s\leq N,\\
p_1,\dotsc,p_s\in\p\end{array}$}}\rho(T^{P(p_1,\dotsc,p_s)}x,T^{P(p_1,\dotsc,p_s)}y)>0, \\
        \liminf\limits_{N\to\infty}\frac{1}{\pi(N)^s}\sum\limits_{\mbox{\tiny$\begin{array}{c}
1\leq p_1,\dotsc, p_s\leq N,\\
p_1,\dotsc,p_s\in\p\end{array}$}}\rho(T^{P(p_1,\dotsc,p_s)}x,T^{P(p_1,\dotsc,p_s)}y)=0.
		\end{aligned}
		\right.
$$
\end{thm}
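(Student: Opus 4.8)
The plan is to mirror the proof of Theorem~\ref{mainthem1}, replacing the polynomial averages over the F\o lner sequence $\{\Phi_N\}$ by the averages over $s$-tuples of primes, and to supply the convergence of the latter from the work of Frantzikinakis, Host and Kra \cite{Frantzikinakis-Host-Kra}. First I would pass to the measure-theoretic setting: by the variational principle, $h_{top}(X,T)>0$ yields an ergodic $T$-invariant Borel probability measure $\mu$ on $X$ with $h_\mu(T)>0$. As in Theorem~\ref{mainthem1}, the scrambled Cantor set is built from two analytic inputs attached to $\mu$. A recurrence estimate guarantees that, along the averaging scheme, a positive proportion of the iterates $T^{P(\vec p)}$ bring a $\mu\times\mu$-generic pair arbitrarily close, forcing the $\liminf$ to vanish; a positive-entropy independence structure (an $\mathrm{IE}$-pair, or entropy pair, in the sense of local entropy theory) guarantees that a positive proportion of the iterates keep the pair uniformly apart, forcing the $\limsup$ to be positive. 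Both inputs are encoded in the limiting behaviour of the averages $\frac{1}{|\Phi_N|}\sum_{u\in\Phi_N} f(T^{P(u)}x)$ for $f$ in a suitable family in $L^2(\mu)$.

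The decisive step is then to establish the prime analogue of these limits. Applying the single-variable convergence theorem of \cite{Frantzikinakis-Host-Kra} successively in each coordinate $p_1,\dots,p_s$, I expect that for every $f\in L^2(\mu)$ the averages
$$\frac{1}{\pi(N)^s}\sum_{\substack{1\le p_1,\dots,p_s\le N\\ p_1,\dots,p_s\in\p}} f\bigl(T^{P(p_1,\dots,p_s)}x\bigr)$$
converge in $L^2(\mu)$ to the very limit produced by the integer (F\o lner) averages of Theorem~\ref{mainthem1}. The mechanism is that, after the standard $W$-trick, the fluctuation of the von Mangoldt weight against polynomial nilsequences is annihilated by the Green--Tao orthogonality estimates, so the prime-weighted average has the same projection onto the Host--Kra nilfactor as the unweighted one; since positive entropy is detected through this projection identically for both schemes, the recurrence and independence inputs transfer without change.

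The main obstacle I anticipate is reconciling the mode of convergence. The statements in \cite{Frantzikinakis-Host-Kra} concern $L^2(\mu)$ convergence of the averages, whereas mean Li-Yorke chaos is a property of individual orbit pairs $(x,y)$ and therefore demands control that is pointwise along the Cantor set. I would bridge this by fixing a countable family $\{f_k\}$ of continuous functions that is dense in $C(X)$, invoking $L^2$ convergence, which yields $\mu$-almost everywhere convergence along a subsequence, to extract a single $\mu$-full-measure set on which all the averages $\frac{1}{\pi(N)^s}\sum_{\vec p} f_k(T^{P(\vec p)}x)$ realise their prescribed limits, and then performing the inductive Cantor construction of Theorem~\ref{mainthem1} entirely inside this full-measure set. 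Care is needed because the normalisation $\pi(N)^s$ couples all $s$ summation variables through the single growth parameter $N$, rather than allowing the flexibility of a genuine F\o lner averaging; but once the limits are pinned down on a full-measure set, the separation ($\limsup>0$) and proximality ($\liminf=0$) estimates are obtained exactly as for the polynomial F\o lner averages.
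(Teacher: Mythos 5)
Your skeleton is the right one and matches the paper's Section 5 in outline: pass to an ergodic $\mu$ with $h_\mu(T)>0$, prove $L^2(\mu)$-convergence of the prime-tuple averages via the $W$-trick of \cite{Frantzikinakis-Host-Kra} (Lemmas \ref{Host-Kra}--\ref{P-host-kra2} and Theorem \ref{P-L^2}), extract pointwise convergence along a subsequence on a full-measure set (Proposition \ref{P-disintegration}), and rerun the proof of Theorem \ref{mainthem1}. But there are two genuine problems with the way you propose to execute it. First, your decisive claim --- that the prime averages converge to \emph{the very limit produced by the F\o lner averages}, because the von Mangoldt weight is ``transparent'' after the $W$-trick --- is false. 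The $W$-trick reduces $\frac{1}{\pi(N)}\sum_{p\le N}T^{p}f$ to averages over the progressions $Wn+r$ with $(r,W)=1$, not over all integers; already for $P(n)=n$ and an eigenfunction $f$ with $Tf=-f$ the unrestricted average tends to $0$ while the prime average tends to $-f$. So you cannot ``transfer the recurrence and independence inputs without change'' by identifying the two limits. Fortunately the identification is unnecessary: all the argument needs is (i) that the prime averages converge (Theorem \ref{P-L^2}), and (ii) that the Pinsker $\sigma$-algebra is characteristic for them (Theorem \ref{P-characteristic}), since positivity of $\int\rho\,d\tau_{(x_1,x_2)}$ on a positive-measure set of fibers comes from $\int\rho\,d\lambda>0$, not from comparison with the integer averages. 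Point (ii) is a step you omit entirely; it rests on the counting estimate $|\{\vec p\in\p^s\cap[1,N]^s:\ t_0<P(\vec p)<t_1\}|\le \pi(N)^{s-1}(t_1-t_0)m=o(\pi(N)^s)$, the prime analogue of the density computation in Theorem \ref{characteristic}, and without it the $\limsup>0$ half has no support.

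Second, the two ``analytic inputs'' you name are not the ones the proof of Theorem \ref{mainthem1} actually uses, so ``mirroring'' it as you describe would not go through. The $\limsup>0$ half does not come from IE-pairs or local entropy theory but from the relatively independent joining $\lambda=\mu\times_{P_\mu(T)}\mu$ over the Pinsker algebra together with Theorem \ref{Pinsker algebra} and Lemma \ref{tauup-mu} (constancy of $\tau_{(z_1,z_2)}$ on fibers $\supp(\mu_x)\times\supp(\mu_x)$); an independence set of positive density in $\N$ says nothing directly about averages along the sparse set $\{P(p_1,\dots,p_s)\}$. The $\liminf=0$ half does not come from a recurrence estimate --- recurrence would only make a generic pair close along \emph{some} iterates, which does not make a Ces\`aro average small --- but from the stable-set theorem of Huang--Xu--Yi (Theorem \ref{W^s(x,T)dense}): for $y_2\in W^s(y_1,T)$ the distances $\rho(T^k y_1,T^k y_2)$ tend to $0$, and the prime tuples with $P(\vec p)\le K_0$ have density zero, so the full average tends to $0$ on $W^s(x,T)\times W^s(x,T)$, which is dense in $\supp(\mu_x)\times\supp(\mu_x)$. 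With these two corrections (drop the false identification of limits, and supply the characteristic-factor and stable-set steps for the prime averages), your outline becomes the paper's proof.
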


This paper is organized as follows. In Section 2, we list basic notions and results needed in our argument. In Section 3, we study some propositions of ergodic average along a no-constant polynomial. In Section 4, we prove Theorem \ref{mainthem1}. Finally, we consider mean Li-Yorke chaos along polynomials of prime numbers and prove Theorem \ref{mainthm2} in Section 5.
\section{Preliminaries}
In this section, we will review some basic notions and fundamental properties that will be used latter.
\subsection{Amenable groups and Banach density in an amenable group}
Recall that a countable discrete group $G$ is called \emph{amenable} if there exists a sequence of finite subsets $\Phi_n\subset G$ such that $\lim\limits_{n\to+\infty}\frac{|g\Phi_n\triangle \Phi_n|}{|\Phi_n|}=0$ holds for every $g\in G$, and we say that such $\{\Phi_n\}_{n=1}^{+\infty}$ is a \emph{F$\phi$lner sequence of $G$}. It is clear that $\Z^s$ is an amenable group for any $s\in N$.

Let $G$ be an amenable group, and $\{\Phi_n\}_{n=1}^{+\infty}$ be a F$\phi$lner sequence of $G$. For any given subset $F$ of $G$, we denote the \emph{upper density of $F$ with respect to $\{\Phi_n\}_{n=1}^{+\infty}$} by
$$\overline{d}_{\{\Phi_n\}}(F):=\limsup\limits_{n\to+\infty}\frac{\big\vert F\cap\Phi_n\big\vert}{\vert\Phi_n\vert}.$$
The \emph{upper Banach density of $F$} is defined by
$$d^*(F):=\sup\big\{\overline{d}_{\{\Phi_n\}}(F):\ \{\Phi_n\}_{n=1}^{+\infty}\ is\ a\ F\phi lner\ sequence \big\}.$$

For $G=\Z^s$, $s\in\N$, the above definition differs from original definition of upper Banach density where the supremum was taken only over intervals instead of arbitrary F$\phi$lner sequence. However the authors of \cite{Beiglock-Bergelson-Fish} proved that the two notions are equivalent(see \cite[Lemma 3.3]{Beiglock-Bergelson-Fish}).
\begin{lem}\label{density} Let $G$ be an amenable group and $\{\Phi_n\}_{n=1}^{+\infty}$ be a F$\phi$lner sequence. Then there exists a sequence $\{t_{n}\}_{n=1}^{+\infty}$ such that $d^*(F)=\overline{d}_{\{\Phi_nt_n\}}(F)$.
\end{lem}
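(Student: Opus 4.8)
The plan is to play two bounds against each other: an automatic upper bound coming from the fact that every translate of the given Følner sequence is again a Følner sequence, and a matching lower bound showing that the translates of one fixed Følner sequence already realize the full Banach density. Throughout, $F\subseteq G$ is the fixed set under consideration, and for the lower bound I will use the double-counting device of averaging $|F\cap\Phi_n t|$ over a second, arbitrary Følner sequence.

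First I would record the easy half. For any sequence $\{t_n\}_{n=1}^{\infty}$ in $G$ one has $|\Phi_n t_n|=|\Phi_n|$, and for each $g\in G$ the identity $g(\Phi_n t_n)\triangle(\Phi_n t_n)=(g\Phi_n\triangle\Phi_n)t_n$ gives $\frac{|g(\Phi_n t_n)\triangle(\Phi_n t_n)|}{|\Phi_n t_n|}=\frac{|g\Phi_n\triangle\Phi_n|}{|\Phi_n|}\to 0$. Hence $\{\Phi_n t_n\}$ is a F\o lner sequence, and by the definition of $d^*$ as a supremum over all F\o lner sequences, $\overline{d}_{\{\Phi_n t_n\}}(F)\le d^*(F)$ for every choice of $\{t_n\}$.

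The heart of the argument is the reverse inequality. Put $M_n:=\sup_{t\in G}\frac{|F\cap\Phi_n t|}{|\Phi_n|}\in[0,1]$; I claim $M_n\ge d^*(F)$ for every $n$. Fix an arbitrary F\o lner sequence $\{\Psi_k\}$. Averaging over $t\in\Psi_k$ and interchanging the order of summation (using $|F\cap\Phi_n t|=\sum_{g\in\Phi_n}\mathbf{1}_F(gt)$ and $\sum_{t\in\Psi_k}\mathbf{1}_F(gt)=|F\cap g\Psi_k|$) yields
$$M_n\ \ge\ \frac{1}{|\Psi_k|}\sum_{t\in\Psi_k}\frac{|F\cap\Phi_n t|}{|\Phi_n|}\ =\ \frac{1}{|\Phi_n|}\sum_{g\in\Phi_n}\frac{|F\cap g\Psi_k|}{|\Psi_k|}.$$
Since $|F\cap g\Psi_k|\ge|F\cap\Psi_k|-|g\Psi_k\triangle\Psi_k|$ for each $g$, the right-hand side is at least $\frac{|F\cap\Psi_k|}{|\Psi_k|}-\frac{1}{|\Phi_n|}\sum_{g\in\Phi_n}\frac{|g\Psi_k\triangle\Psi_k|}{|\Psi_k|}$. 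Now I hold $n$ fixed, so that $\Phi_n$ is one fixed \emph{finite} subset of $G$: as $k\to\infty$ each term $\frac{|g\Psi_k\triangle\Psi_k|}{|\Psi_k|}\to 0$ by the F\o lner property of $\{\Psi_k\}$, hence the finite average over $g\in\Phi_n$ tends to $0$ as well. Taking $\limsup_{k\to\infty}$ gives $M_n\ge\overline{d}_{\{\Psi_k\}}(F)$, and since $\{\Psi_k\}$ was arbitrary, taking the supremum over all F\o lner sequences gives $M_n\ge d^*(F)$.

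Finally I would assemble the two bounds: for each $n$ choose $t_n\in G$ with $\frac{|F\cap\Phi_n t_n|}{|\Phi_n|}\ge M_n-\frac1n\ge d^*(F)-\frac1n$, so that $\overline{d}_{\{\Phi_n t_n\}}(F)=\limsup_{n\to\infty}\frac{|F\cap\Phi_n t_n|}{|\Phi_n|}\ge d^*(F)$, which combined with the first step forces equality. The main obstacle is the averaging step of the third paragraph: the whole argument hinges on fixing the window $\Phi_n$ \emph{before} sending $k\to\infty$, since only then can the F\o lner error summed over the finitely many $g\in\Phi_n$ be driven to zero; allowing both indices to vary at once would leave this error uncontrolled.
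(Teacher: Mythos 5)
Your proof is correct. The paper itself gives no argument for this lemma --- it is simply quoted from Beiglb\"ock--Bergelson--Fish \cite[Lemma 3.3]{Beiglock-Bergelson-Fish} --- and your write-up is a sound self-contained proof along the standard lines of that reference: right-translates of a left F{\o}lner sequence remain left F{\o}lner (since $g(\Phi_n t_n)\triangle(\Phi_n t_n)=(g\Phi_n\triangle\Phi_n)t_n$), which gives $\overline{d}_{\{\Phi_n t_n\}}(F)\le d^*(F)$ for free, and the double-counting average of $|F\cap\Phi_n t|$ over $t$ in an auxiliary F{\o}lner sequence $\{\Psi_k\}$, with the error $|F\cap g\Psi_k|\ge|F\cap\Psi_k|-|g\Psi_k\triangle\Psi_k|$ controlled by fixing the finite window $\Phi_n$ before letting $k\to\infty$, gives the matching lower bound $\sup_t|F\cap\Phi_n t|/|\Phi_n|\ge d^*(F)$. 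The one delicate point --- that the left F{\o}lner condition is what is needed both for the translates $\Phi_n t_n$ and for the sets $g\Psi_k$ --- is handled correctly.
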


\subsection{disintegration of measure}For a TDS $(X,T)$, we denote the collection of all Borel probability measures of $X$ by $\mathcal{M}(X)$, the collection of all $T$-invariant Borel probability measures of $X$ by $\mathcal{M}(X,T)$, and the collection of all ergodic measures of $(X,T)$ by $\mathcal{M}^e(X,T)$. We now recall the main results and properties of condition expectation and disintegration of
measures. We refer to \cite[Chapter 5]{Einsiedler-Ward} for more details.

Let $(X,\mathscr{B}_X,\mu)$ be a probability space, and $\mathscr{A}\subseteq\mathscr{B}_X$ a sub-$\sigma$-algebra. Then there is a map
$$E(\cdot|\mathscr{A}):L^1(X,\mathscr{B}_X,\mu)\to L^1(X,\mathscr{A},\mu)$$
called the \emph{conditional expectation}, that satisfies the following properties.
\begin{enumerate}
\item For $f\in L^1(X,\mathscr{B}_X,\mu)$, the image function $E(f|\mathscr{A})$ is characterized almost everywhere by the two properties:
\begin{itemize}
	\item $E(f|\mathscr{A})$ is $\mathscr{A}$-measurable;
	\item for any $A\in\mathscr{A}$, $\int_A E(f|\mathscr{A})d\mu=\int_A fd\mu$.
\end{itemize}
\item $E(\cdot|\mathscr{A})$ is a linear operator of norm 1. Moreover, $E(\cdot|\mathscr{A})$ is positive.
\item For $f\in L^1(X,\mathscr{B}_X,\mu)$ and $g\in L^{\infty}(X,\mathscr{A},\mu)$,
    $$E(g\cdot f|\mathscr{A})=g\cdot E(f|\mathscr{A})$$
    almost everywhere.
\item $\mathscr{A}'\subseteq\mathscr{A}$ is a sub-$\sigma$-algebra, then
    $$E\big(E(f|\mathscr{A})\big|\mathscr{A}'\big)=E(f|\mathscr{A}')$$
    almost everywhere.
\end{enumerate}
The following result is well-known (see e.g. \cite[ Theorem 14.26]{Glasner}, \cite[Section 5.2]{Einsiedler-Ward}).
\begin{thm}\label{function limits}Let $(X,\mathscr{B}_X,\mu)$ be a probability space. Suppose that $\{\mathscr{A}_n\}_{n=1}^{\infty}$ is a decreasing sequence (resp. an increasing sequence) of sub-$\sigma$-algebras of $\mathscr{B}_X$ and $\mathscr{A}=\bigcap\limits_{n\geq}\mathscr{A}_n$ (resp. $\mathscr{A}=\bigvee\limits_{n\geq1}\mathscr{A}_n$). Then for any $f\in L^1(\mu)$,
$$E(f|\mathscr{A}_n)\to E(f|\mathscr{A})$$
as $n\to\infty$ in $L^1(\mu)$ and $\mu$-almost everywhere.
\end{thm}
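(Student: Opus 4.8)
The plan is to recognize this as the martingale convergence theorem: in the increasing case $(E(f|\mathscr{A}_n))_{n\geq 1}$ is a martingale adapted to the filtration $\{\mathscr{A}_n\}$, while in the decreasing case it is a reverse martingale. I would establish the two modes of convergence separately, first the $L^1$ statement by a Hilbert-space density argument and then the $\mu$-almost everywhere statement by a maximal inequality. Throughout I will use only the four listed properties of $E(\cdot|\mathscr{A})$, in particular that each $E(\cdot|\mathscr{A}_n)$ is a positive operator of norm $1$ on $L^1(\mu)$ and, by restriction, the orthogonal projection of $L^2(\mu)$ onto the closed subspace $L^2(X,\mathscr{A}_n,\mu)$.

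For the $L^1$ convergence I would first treat $f\in L^2(\mu)$. In the increasing case the subspaces $L^2(X,\mathscr{A}_n,\mu)$ increase and the closure of their union is exactly $L^2(X,\mathscr{A},\mu)$ for $\mathscr{A}=\bigvee_n\mathscr{A}_n$; in the decreasing case they decrease with intersection $L^2(X,\mathscr{A},\mu)$ for $\mathscr{A}=\bigcap_n\mathscr{A}_n$. A standard fact of Hilbert space theory is that orthogonal projections onto a monotone sequence of closed subspaces converge strongly to the projection onto the limiting subspace, which gives $E(f|\mathscr{A}_n)\to E(f|\mathscr{A})$ in $L^2(\mu)$, hence in $L^1(\mu)$, for every $f\in L^2(\mu)$. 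For general $f\in L^1(\mu)$ I would use that $L^2(\mu)$ is dense in $L^1(\mu)$ together with the uniform bound $\|E(\cdot|\mathscr{A}_n)\|\leq 1$: choosing $g\in L^2(\mu)$ with $\|f-g\|_1<\varepsilon$ and estimating $\|E(f|\mathscr{A}_n)-E(f|\mathscr{A})\|_1\leq \|E(f-g|\mathscr{A}_n)\|_1+\|E(g|\mathscr{A}_n)-E(g|\mathscr{A})\|_1+\|E(g-f|\mathscr{A})\|_1$, an $\varepsilon/3$-type argument yields the claim.

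For the almost everywhere convergence I would pass through the appropriate maximal inequality. In the increasing case Doob's maximal inequality gives $\mu\{\sup_n|E(f|\mathscr{A}_n)|>\lambda\}\leq \lambda^{-1}\|f\|_1$, and in the decreasing case the analogous reverse-martingale maximal inequality holds. I would then verify a.e. convergence on a dense subclass of $L^1(\mu)$: in the increasing case any bounded $\mathscr{A}_N$-measurable $g$ works, since then $E(g|\mathscr{A}_n)=g$ for all $n\geq N$; in the decreasing case bounded functions suffice once $L^2$ convergence has been established. For general $f$, writing $f=g+(f-g)$ with $g$ in the dense class and $\|f-g\|_1$ small, the maximal inequality bounds $\limsup_{n,m\to\infty}|E(f|\mathscr{A}_n)-E(f|\mathscr{A}_m)|$ in measure by a quantity controlled by $\|f-g\|_1$; letting $\|f-g\|_1\to 0$ forces this oscillation to vanish $\mu$-a.e., so the sequence converges a.e. Finally I would identify the a.e. limit with the $L^1$ limit $E(f|\mathscr{A})$: the limit is $\mathscr{A}$-measurable and satisfies $\int_A\lim_n E(f|\mathscr{A}_n)\,d\mu=\int_A f\,d\mu$ for all $A$ in an algebra generating $\mathscr{A}$, which by the characterization in property (1) pins it down as $E(f|\mathscr{A})$.

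The main obstacle is the almost everywhere part, and within it the maximal inequality together with the Banach-principle step that converts ``a.e. convergence on a dense subclass plus a weak-type maximal bound'' into a.e. convergence for all $f\in L^1(\mu)$; the reverse-martingale version needed for the decreasing case requires extra care because the index runs backwards. The $L^1$ convergence, by contrast, is essentially immediate once the conditional expectations are viewed as orthogonal projections and the contraction property is invoked.
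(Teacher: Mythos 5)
The paper offers no proof of this statement: it is quoted as a known result (the forward and reverse martingale convergence theorems for conditional expectations) with pointers to Glasner and to Einsiedler--Ward, so there is no in-paper argument to compare yours against. Your overall strategy is the standard textbook proof, and the $L^1$ part is correct and complete: monotone sequences of orthogonal projections converge strongly to the projection onto the limiting subspace, and the uniform $L^1$-contractivity of $E(\cdot|\mathscr{A}_n)$ upgrades this from the dense subspace $L^2(\mu)$ to all of $L^1(\mu)$ by an $\varepsilon/3$ argument.

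The almost-everywhere part, however, has two concrete gaps as written. First, in the increasing case your dense subclass (bounded $\mathscr{A}_N$-measurable $g$) is dense in $L^1(X,\mathscr{A},\mu)$ but not in $L^1(X,\mathscr{B}_X,\mu)$, so for general $f$ you cannot make $\Vert f-g\Vert_1$ small with $g$ in that class; you must first invoke the tower property (property (4)) to write $E(f|\mathscr{A}_n)=E\bigl(E(f|\mathscr{A})\,\big|\,\mathscr{A}_n\bigr)$ and reduce to $\mathscr{A}$-measurable $f$. Second, and more seriously, in the decreasing case the assertion that ``bounded functions suffice once $L^2$ convergence has been established'' is not a valid dense-subclass verification: $L^2$ convergence does not imply a.e.\ convergence, and bounded functions are already dense in $L^1(\mu)$, so no reduction has been achieved. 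A class that does work is the linear span of $L^{\infty}(X,\mathscr{A},\mu)$ together with the bounded functions $k$ satisfying $E(k|\mathscr{A}_N)=0$ for some $N$; for $h+k$ in this span one has $E(h+k|\mathscr{A}_n)=h$ for all $n\geq N$, so a.e.\ convergence is trivial there, and the span is dense because its $L^2$-closure contains $L^2(\mathscr{A})\oplus L^2(\mathscr{A})^{\perp}$ (using $\bigl(\bigcap_N L^2(\mathscr{A}_N)\bigr)^{\perp}=\overline{\mathrm{span}}\bigcup_N L^2(\mathscr{A}_N)^{\perp}$). With these repairs your Banach-principle scheme --- the weak-type $(1,1)$ maximal inequality, which does hold for reverse martingales by applying Doob's inequality to the finite martingale $E(f|\mathscr{A}_N),\dotsc,E(f|\mathscr{A}_1)$ and letting $N\to\infty$, combined with a.e.\ convergence on the dense class --- goes through, and the identification of the a.e.\ limit with $E(f|\mathscr{A})$ is then immediate from the already-established $L^1$ convergence.
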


Let $(X,\mathscr{B}_X,\mu)$ be a Borel probability space, and $\mathscr{A}\subseteq\mathscr{B}_X$ a $\sigma$-algebra. Then $\mu$ can \emph{be disintegrated over $\mathscr{A}$} as
\[
\mu=\int_X \mu_x^{\mathscr{A}} d\mu(x)
\]
in the sense that for any $f\in L^1(X,\mathscr{B}_X,\mu)$, one has
\begin{equation}
E(f|\mathscr{A})(x)=\int f(y)d\mu_x^{\mathscr{A}}(y)\quad
\text{for }\mu\text{-a.e.\ }x\in X,\label{eq:Ef-mux}
\end{equation}
where $\mu_x^{\mathscr{A}}\in \mathcal{M}(X)$.

\subsection{Entropy}
Let $(X,\mathscr{B}_X,\mu,T)$ be a measure preserving system. A partition of $X$ is a cover of $X$, whose elements are pairwise disjoint. For a finite measurable partition $\alpha$, the \emph{measure-theoretic entropy of $\mu$ relative to $\alpha$}, denoted by $h_{\mu}(T,\alpha)$, is defined as
$$h_{\mu}(T,\alpha)=\lim_{n\to\infty}\frac{1}{n}H_{\mu}
\biggl(\bigvee_{i=0}^{n-1}T^{-i}\alpha\biggr),$$
where $H_{\mu}(\alpha)=-\sum\limits_{A\in\alpha}\mu(A)\log \mu(A)$. The \emph{measure-theoretic entropy of $\mu$} is defined as
$$h_{\mu}(X,T)=\sup\limits_{\alpha}h_{\mu}(T,\alpha),$$
where the supremum ranges over all finite measurable partitions of $X$.

The Pinsker $\sigma$-algebra of a system $(X,\mathscr{B}_X,\mu,T)$ is defined as
$$P_{\mu}(T)=\{A\in\mathscr{B}_X: h_{\mu}(T,\{A,X\backslash A\})=0.$$
The following Rokhlin-Sinai theorem identifies the Pinsker $\sigma$-algebra as the ``remote past'' of a generating partition (see \cite{Rokhlin-Sinai}).
\begin{thm}\label{Rokhlin-Sinai}Let $(X,\mathscr{B}_X,\mu,T)$ be a measure preserving system, then there exists a sub-$\sigma$-algebra $\mathscr{P}$ of $\mathscr{B}_X$ such that $T^{-1}\mathscr{P}\subset\mathscr{P}$, $\bigvee\limits_{k=0}^{\infty}T^k\mathscr{P}=\mathscr{B}_X$ and $\bigcap\limits_{n=0}^{\infty}T^{-k}\mathscr{P}=P_{\mu}(T).$
\end{thm}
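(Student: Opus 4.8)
The plan is to exhibit $\mathscr{P}$ explicitly as the ``one-sided'' algebra attached to a countable generating family, and then to identify its remote past with the Pinsker $\sigma$-algebra. Since $(X,\mathscr{B}_X,\mu)$ is a standard Borel space, $\mathscr{B}_X$ is countably generated modulo $\mu$; fix a countable family $\{B_i\}_{i\ge 1}$ that generates $\mathscr{B}_X$ and put
\[
\mathscr{P}:=\sigma\bigl(T^{-k}B_i:\ k\ge 0,\ i\ge 1\bigr).
\]
Applying $T^{-1}$ merely raises every index $k$, so $T^{-1}\mathscr{P}\subseteq\mathscr{P}$; and because $T$ is invertible and $\{B_i\}$ generates, every $B_i$ lies in $\bigvee_{k\ge 0}T^k\mathscr{P}$, whence $\bigvee_{k\ge 0}T^k\mathscr{P}=\mathscr{B}_X$. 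Thus $\mathscr{P}$ already satisfies the first two assertions, and the entire problem is reduced to proving that the remote past $\mathscr{P}_{-\infty}:=\bigcap_{n\ge 0}T^{-n}\mathscr{P}$, which is visibly $T$-invariant, equals $P_{\mu}(T)$.

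I would establish $\mathscr{P}_{-\infty}=P_{\mu}(T)$ by two inclusions, the common engine being the martingale convergence theorem (Theorem~\ref{function limits}) applied to the increasing filtration $T^k\mathscr{P}\uparrow\mathscr{B}_X$ and the decreasing filtration $T^{-n}\mathscr{P}\downarrow\mathscr{P}_{-\infty}$, together with the invariance identity $H_{\mu}(\alpha\mid T^{-n}\mathscr{P})=H_{\mu}(T^{n}\alpha\mid\mathscr{P})$ for finite partitions $\alpha$. For the inclusion $\mathscr{P}_{-\infty}\subseteq P_{\mu}(T)$ one shows that the factor carried by $\mathscr{P}_{-\infty}$ has zero entropy: a finite partition $\gamma\subseteq\mathscr{P}_{-\infty}$ lies in $T^{-n}\mathscr{P}$ for every $n$, so it sits arbitrarily deep in the filtration, and the standard predictability argument (the present of $\gamma$ is forced by its strict past) yields $h_{\mu}(T,\gamma)=H_{\mu}\bigl(\gamma\mid\bigvee_{m\ge 1}T^m\gamma\bigr)=0$; hence every such $\gamma$ belongs to $P_{\mu}(T)$.

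The reverse inclusion $P_{\mu}(T)\subseteq\mathscr{P}_{-\infty}$ is where I expect the main obstacle. By $T$-invariance of the Pinsker algebra it is equivalent to $P_{\mu}(T)\subseteq\mathscr{P}$, i.e.\ to the assertion that every zero-entropy set is measurable with respect to the exhaustive algebra $\mathscr{P}$. The natural route is to prove that $\mathscr{B}_X$ has completely positive entropy relative to $\mathscr{P}_{-\infty}$: if a finite partition $\alpha$ is not $\mathscr{P}_{-\infty}$-measurable then $h_{\mu}(T,\alpha\mid\mathscr{P}_{-\infty})>0$. Granting this, for $A\in P_{\mu}(T)$ the partition $\alpha=\{A,X\setminus A\}$ has $0=h_{\mu}(T,\alpha)\ge h_{\mu}(T,\alpha\mid\mathscr{P}_{-\infty})$, forcing $\alpha\subseteq\mathscr{P}_{-\infty}$ and $A\in\mathscr{P}_{-\infty}$. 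The relative completely positive entropy is obtained by monitoring the nondecreasing sequence $g_n:=H_{\mu}(\alpha\mid T^{-n}\mathscr{P})$, which runs from $0$ (as $n\to-\infty$, where $T^{-n}\mathscr{P}\uparrow\mathscr{B}_X$) up to $H_{\mu}(\alpha\mid\mathscr{P}_{-\infty})$ (as $n\to+\infty$): when $\alpha\not\subseteq\mathscr{P}_{-\infty}$ these two endpoints differ, the telescoping increments $g_{n+1}-g_n$ sum to a strictly positive total, and one converts this information gain into a positive lower bound for the relative entropy rate. The martingale convergence theorem guarantees that the endpoints are genuinely attained and controls the passage from the one-step conditional entropies to the asymptotic rate; since the whole scheme is precisely the classical Rokhlin--Sinai argument, the remaining steps are routine.
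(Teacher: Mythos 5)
The paper itself offers no proof of this statement: Theorem \ref{Rokhlin-Sinai} is quoted as a classical result with a pointer to the reference of Rokhlin and Sinai, so your attempt has to be judged on its own merits. It has a genuine gap, located exactly where you defer to ``the standard predictability argument'' and ``routine'' steps, and the root cause is the construction itself. For an arbitrary countable generating family $\{B_i\}$, the $\sigma$-algebra $\mathscr{P}=\sigma(T^{-k}B_i:\ k\ge 0,\ i\ge 1)$ does satisfy $T^{-1}\mathscr{P}\subseteq\mathscr{P}$ and $\bigvee_{k\ge 0}T^k\mathscr{P}=\mathscr{B}_X$, but these two properties do not determine the tail: $\mathscr{P}=\mathscr{B}_X$ satisfies both, with tail $\mathscr{B}_X$. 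And your $\mathscr{P}$ can genuinely degenerate to $\mathscr{B}_X$: in the two-sided Bernoulli shift take $\{B_i\}$ to be the coordinate cylinders $[x_m=1]$, $m\in\Z$ (a countable generating family); then the sets $T^{-k}B_i$ already exhaust all coordinates, so $\mathscr{P}=\mathscr{B}_X$ and $\bigcap_{n\ge 0}T^{-n}\mathscr{P}=\mathscr{B}_X\neq P_{\mu}(T)=\{\emptyset,X\}$. Hence the inclusion $\mathscr{P}_{-\infty}\subseteq P_{\mu}(T)$ is false for your $\mathscr{P}$ in general, and your argument for it is circular: ``the present of $\gamma$ is forced by its strict past'' is a restatement of $h_{\mu}(T,\gamma)=0$, not a consequence of $\gamma\subseteq\bigcap_n T^{-n}\mathscr{P}$; membership in the remote past of $\mathscr{P}$ does not by itself give $\gamma\subseteq\bigvee_{m\ge 1}T^{-m}\gamma$.

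To repair this you must choose $\mathscr{P}$ more carefully --- classically $\mathscr{P}=\bigvee_{k\le 0}T^{k}\xi$ for a finite (or finite-entropy) generator $\xi$, with a staircase modification $\bigvee_{n}\bigvee_{k\ge a_n}T^{-k}\xi_n$ over an increasing sequence of finite partitions in the infinite-entropy case --- and then prove the Pinsker-type identity $h_{\mu}(T,\alpha)=H_{\mu}\bigl(\alpha\mid\alpha^-\vee\mathscr{P}_{-\infty}\bigr)$, where $\alpha^-=\bigvee_{m\ge 1}T^{-m}\alpha$, for every finite partition $\alpha$; both inclusions follow from that identity. This identity is the actual content of the theorem: it uses exhaustiveness through a delicate interchange of the increasing and decreasing martingale limits, and it is not delivered by your telescoping scheme, since the increments $g_{n+1}-g_n$ of $g_n=H_{\mu}(\alpha\mid T^{-n}\mathscr{P})$ have Ces\`aro averages tending to $0$, so a strictly positive total does not yield a positive entropy rate without a uniform inequality bounding $h_{\mu}(T,\alpha)$ below by these increments. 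Given that the paper only cites the theorem, a reference is the appropriate ``proof'' here; if you want to reprove it, the pieces above are the whole theorem.
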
\section{Ergodic average along a no-constant polynomial}
In this section, by partially following from the arguments in \cite{Li-Qiao}, we study ergodic average along a non-constant polynomial. For a measure preserving system $(X,\mathcal{B}_X,\mu,T)$ and a measurable function $f$, we write $Tf$ for the function defined by $(Tf)(x)=f(Tx)$. In \cite{Leibman}, the author proved the following result.

\begin{thm}\label{Leibman}Let $(X,\mathscr{B}_X,\mu,T)$ be a measure preserving system, $s\in\N$ and $P:\Z^s\to\Z$ be a polynomial, then for any $f\in L^{\infty}(\mu)$ and any F$\phi$lner sequence $\{\Phi_N\}_{N=1}^{\infty}$ of $\Z^s$, the averages
$$\frac{1}{|\Phi_N|}\sum\limits_{u\in\Phi_{N}}T^{P(u)}f$$
converges in $L^2(\mu)$ as $N\to\infty$.
\end{thm}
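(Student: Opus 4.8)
The plan is to reduce the vector-valued convergence to a scalar equidistribution statement through the spectral theorem, and then to settle that statement using the F$\phi$lner property together with a van der Corput differencing argument. Write $U$ for the Koopman operator $Uf=f\circ T$, which is unitary on $L^2(\mu)$; if $P$ is constant the averages are constant and there is nothing to prove, so I assume $P$ is non-constant and that $f\in L^\infty(\mu)\subseteq L^2(\mu)$.

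First I would invoke the spectral theorem for the single unitary $U$: it assigns to $f$ a finite positive spectral measure $\sigma_f$ on $\mathbb{T}=\R/\Z$ satisfying $\langle U^n f,f\rangle=\int_{\mathbb{T}}e^{2\pi i n\theta}\,d\sigma_f(\theta)$ for all $n\in\Z$. Setting $A_N=\frac{1}{|\Phi_N|}\sum_{u\in\Phi_N}U^{P(u)}f$ and $c_N(\theta)=\frac{1}{|\Phi_N|}\sum_{u\in\Phi_N}e^{2\pi i P(u)\theta}$, expanding the inner products gives
$$\|A_N-A_M\|_{L^2(\mu)}^2=\int_{\mathbb{T}}\bigl|c_N(\theta)-c_M(\theta)\bigr|^2\,d\sigma_f(\theta).$$
Since $|c_N(\theta)|\le 1$ and $\sigma_f$ is finite, dominated convergence reduces everything to showing that for each fixed $\theta\in\mathbb{T}$ the scalar sequence $c_N(\theta)$ is convergent in $\mathbb{C}$: this forces $\|A_N-A_M\|\to 0$, so $(A_N)$ is Cauchy and hence convergent. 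Note that I never need to identify the limit of $c_N(\theta)$, only its convergence.

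The scalar problem is a Weyl-sum statement along the F$\phi$lner sequence, and the fact I would isolate as a base case is that for any frequency $\xi\in\R^s$ the averages $b_N(\xi)=\frac{1}{|\Phi_N|}\sum_{u\in\Phi_N}e^{2\pi i\langle\xi,u\rangle}$ converge, to $1$ when $e^{2\pi i\xi_j}=1$ for every $j$ and to $0$ otherwise. The vanishing case is precisely where the F$\phi$lner property enters: comparing $\Phi_N$ with its translate $\Phi_N+e_j$ yields $(1-e^{2\pi i\xi_j})\,b_N(\xi)\to 0$, so $b_N(\xi)\to 0$ as soon as some $e^{2\pi i\xi_j}\ne 1$. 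For rational $\theta$ the map $u\mapsto e^{2\pi i P(u)\theta}$ is periodic on $\Z^s$ (as $P$ is integer-valued), hence a finite combination of such characters, and $c_N(\theta)$ converges to its mean over a period. For irrational $\theta$, non-constancy of $P$ forces a higher-degree coefficient of $P(u)\theta$ to be irrational, and I would run a van der Corput differencing induction on $\deg P$: the inequality controls $\limsup_N|c_N(\theta)|^2$ by an average over shifts $h$ of quantities governed by $P(u+h)-P(u)$, a polynomial of strictly smaller degree in $u$, the base case being the linear characters already treated, which drives $c_N(\theta)\to 0$.

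The main obstacle is this last step: making the van der Corput lemma valid along an \emph{arbitrary} F$\phi$lner sequence of $\Z^s$ rather than along boxes, and carrying out the PET-style bookkeeping that guarantees iterated differencing genuinely lowers the degree while keeping a leading coefficient irrational for a density-one set of shifts $h$, so that the induction both terminates and produces a vanishing right-hand side. Once convergence of $c_N(\theta)$ is established for every $\theta\in\mathbb{T}$, the spectral reduction together with dominated convergence closes the argument immediately.
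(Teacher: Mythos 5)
The paper does not actually prove this statement: it is imported verbatim from Leibman's work, whose argument (for the much more general multiple-average setting) runs through characteristic factors and nilsystems. Your spectral reduction is therefore a genuinely different and, for this single-transformation single-polynomial case, more elementary route, and the reduction itself is correct and complete: with $U$ the Koopman operator and $\sigma_f$ the spectral measure of $f$, the identity $\|A_N-A_M\|_{L^2(\mu)}^2=\int_{\mathbb{T}}|c_N(\theta)-c_M(\theta)|^2\,d\sigma_f(\theta)$ is valid, pointwise convergence of $c_N(\theta)$ plus dominated convergence (against the finite measure $\sigma_f$, with the bound $|c_N|\le1$) does make $(A_N)$ Cauchy, the rational-$\theta$ case correctly reduces to finitely many characters, and the translation trick $(1-e^{2\pi i\xi_j})b_N(\xi)\to0$ is exactly the right way to exploit the F\o lner hypothesis in the linear base case.

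The gap is the one you name yourself: convergence of $c_N(\theta)=\frac{1}{|\Phi_N|}\sum_{u\in\Phi_N}e^{2\pi iP(u)\theta}$ for irrational $\theta$ along an \emph{arbitrary} F\o lner sequence of $\Z^s$ is the entire analytic content of the theorem after the spectral reduction, and you leave it as an acknowledged obstacle rather than proving it. The good news is that it is fillable and less delicate than you fear. The Hilbert-space van der Corput inequality holds verbatim along F\o lner sequences: replacing $\frac{1}{|\Phi_N|}\sum_{u\in\Phi_N}a_u$ by $\frac{1}{|\Phi_N|}\sum_{u\in\Phi_N}\frac{1}{|\Psi_H|}\sum_{h\in\Psi_H}a_{u+h}$ costs only a F\o lner error, and Cauchy--Schwarz then bounds $\limsup_N|c_N(\theta)|^2$ by $\limsup_H\frac{1}{|\Psi_H|^2}\sum_{h,h'\in\Psi_H}\limsup_N\bigl|\frac{1}{|\Phi_N|}\sum_{u\in\Phi_N}e^{2\pi i(P(u+h)-P(u+h'))\theta}\bigr|$, where you are free to take $\Psi_H=[-H,H]^s$. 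No PET bookkeeping is needed for a single polynomial: $Q_{h,h'}(u)=P(u+h)-P(u+h')$ always has degree at most $\deg P-1$ in $u$; it remains of the form (rational-coefficient polynomial)$\times\theta$ after differencing, so \emph{every} nonzero nonconstant coefficient is automatically irrational (this is where it matters that $P$ itself has rational coefficients --- the induction would break for a general real polynomial whose leading coefficient is rational); and the pairs $(h,h')$ for which $Q_{h,h'}$ degenerates to a constant force $h-h'$ into the proper rational subspace where the directional derivative of the top homogeneous part of $P$ vanishes identically, a set of density zero in $\Psi_H\times\Psi_H$, whose terms you simply bound by $1$. Induction on $\deg P$ with your linear case as the base then gives $c_N(\theta)\to0$. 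Until that induction is written out, the proof is incomplete at its only substantive step.
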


\begin{prop}\label{disintegration} Let $s\in\N$, $P:\Z^s\to\Z$ be a non-constant polynomial and $\{\Phi_N\}_{N=1}^{\infty}$ be a F$\phi$lner sequence of $\Z^s$. Then for any measure preserving system $(X,\mathscr{B}_X,\mu,T)$, there exists a disintegration of $\mu$
$$\mu=\int \tau_xd\mu(x),$$
in the sense that, there exists a subset $X_0\in\mathscr{B}_X$ of $X$ with $\mu(X_0)=1$, and $\{N_i\}_{i=1}^{\infty}\subset\N$ such that for any $f\in C(X)$ and $x\in X_0$ one has
$$\lim\limits_{i\to\infty}\frac{1}{|\Phi_{N_i}|}\sum_{u\in\Phi_{N_i}}f(T^{P(u)}x)=\int fd\tau_x,$$
and
$$\int\int fd\tau_xd\mu(x)=\int fd\mu.$$
\end{prop}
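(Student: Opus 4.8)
The plan is to obtain each limit measure $\tau_x$ as a limit of the empirical measures $\lambda_N^x:=\frac{1}{|\Phi_N|}\sum_{u\in\Phi_N}\delta_{T^{P(u)}x}$, arranged so that a single subsequence and a single full-measure set of points $x$ work for every continuous test function simultaneously. First I would fix a countable family $\{f_k\}_{k=1}^{\infty}\subset C(X)$ that is dense in the uniform norm, which is possible because $X$ is a compact metric space and hence $C(X)$ is separable. For each $k$, Theorem \ref{Leibman} shows that $A_Nf_k:=\frac{1}{|\Phi_N|}\sum_{u\in\Phi_N}T^{P(u)}f_k$ converges in $L^2(\mu)$ as $N\to\infty$; since $L^2$ convergence yields an almost everywhere convergent subsequence, a diagonal extraction over $k$ produces a single subsequence $\{N_i\}_{i=1}^{\infty}$ and a set $X_0\in\mathscr{B}_X$ with $\mu(X_0)=1$ such that $\lim_{i\to\infty}A_{N_i}f_k(x)$ exists for every $k$ and every $x\in X_0$.

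Next I would upgrade this to convergence of the empirical measures themselves. Writing $A_{N_i}f(x)=\int f\,d\lambda_{N_i}^x$ and noting that each $\lambda_{N_i}^x$ is a probability measure, the bound $\bigl|\int f\,d\lambda_{N_i}^x-\int f_k\,d\lambda_{N_i}^x\bigr|\le\|f-f_k\|_{\infty}$ together with the density of $\{f_k\}$ shows that for $x\in X_0$ the sequence $A_{N_i}f(x)$ is Cauchy, hence convergent, for every $f\in C(X)$. The limit functional $f\mapsto\lim_{i\to\infty}A_{N_i}f(x)$ is positive, linear, and sends the constant $1$ to $1$, so by the Riesz representation theorem it is integration against a Borel probability measure $\tau_x\in\mathcal{M}(X)$; thus $\int f\,d\tau_x=\lim_{i\to\infty}\frac{1}{|\Phi_{N_i}|}\sum_{u\in\Phi_{N_i}}f(T^{P(u)}x)$ for all $f\in C(X)$ and $x\in X_0$, and I set $\tau_x$ equal to any fixed measure off the null set $X\setminus X_0$. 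Measurability of $x\mapsto\int f\,d\tau_x$ is automatic on $X_0$, being a pointwise limit of the continuous functions $A_{N_i}f$.

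Finally the averaging identity follows by integrating in $x$ and exchanging the limit with the integral. Since $|A_{N_i}f|\le\|f\|_{\infty}$, dominated convergence gives $\int\int f\,d\tau_x\,d\mu(x)=\lim_{i\to\infty}\int A_{N_i}f\,d\mu$, while the $T$-invariance of $\mu$ makes each term equal to $\int f\,d\mu$, because $\int f(T^{P(u)}x)\,d\mu(x)=\int f\,d\mu$ for every $u$; hence $\int\int f\,d\tau_x\,d\mu(x)=\int f\,d\mu$. The one genuinely delicate point is the simultaneity secured in the first step: Theorem \ref{Leibman} provides $L^2$, not pointwise, convergence, and for a single $x$ the empirical measures $\lambda_N^x$ need only subconverge along an $x$-dependent subsequence; it is the diagonal argument over a countable dense family, combined with the uniform boundedness of the probability measures $\lambda_N^x$, that forces one subsequence $\{N_i\}$ and one full-measure set $X_0$ to work for all $f\in C(X)$ at once.
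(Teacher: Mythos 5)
Your proof is correct and follows essentially the same route as the paper's: a diagonal extraction over a countable dense subset of $C(X)$ using the $L^2$-convergence from Theorem \ref{Leibman}, extension to all of $C(X)$ by a density/triangle-inequality argument, the Riesz representation theorem to produce $\tau_x$, and dominated convergence plus $T$-invariance for the identity $\int\int f\,d\tau_x\,d\mu(x)=\int f\,d\mu$. The only cosmetic difference is that you control the density step with the uniform norm directly, whereas the paper phrases it in $L^{\infty}(\mu)$ and detours through $\supp(\mu)$ to recover a pointwise bound along orbits; your version is the cleaner of the two.
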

\begin{proof}Let $\{g_n\}_{n=1}^{\infty}$ be a dense subset of $C(X)$. For $g_1$, by Theorem \ref{Leibman}, there exists an increasing sequence  $\{N_i^1\}_{i=1}^{\infty}\subset\N$ and $X_1\in\mathscr{B}_X$ with $\mu(X_1)=1$ such that for every $x\in X_1$ the averages
$$\frac{1}{|\Phi_{N_i^1}|}\sum\limits_{u\in\Phi_{N_i^1}}T^{P(u)}g_1(x)$$
converges as $i\to\infty$. Continuing this process, we can get $\{N_i^{k+1}\}_{i=1}^{\infty}\subset\{N_i^k\}_{i=1}^{\infty}$ and $X_{k+1}\subset X_k$ with $\mu(X_{k+1})=1$ such that for every $x\in X_{k+1}$ the averages
$$\frac{1}{|\Phi_{N_i^{k+1}}|}\sum\limits_{u\in\Phi_{N_i^{k+1}}}T^{P(u)}g_{k+1}(x)$$
converges as $i\to\infty$, for $k=1,2,\dotsc$. Let $N_i=N_i^i$ for every $i\in\N$ and $X_{\infty}=\bigcap_{i=1}^{\infty}X_i$, then $\mu(X_{\infty})=1$ and for every $x\in X_{\infty}$, $k\in\N$ the averages
$$\frac{1}{|\Phi_{N_i}|}\sum\limits_{u\in\Phi_{N_i}}T^{P(u)}g_k(x)$$
converges as $i\to\infty$. Let $X_0=X_{\infty}\cap \supp(\mu)$, then $\mu(X_0)=1$.

For every $f\in C(X)$, there exists $\{f_i\}_{i=1}^{\infty}\subset\{g_i\}_{i=1}^{\infty}$ such that $||f_j-f||_{L^{\infty}}\to 0$ as $j\to\infty$. For any $x_0\in X_0$ and $\varepsilon>0$ there exists $j_0$ such that $\Vert f_{j_0}-f\Vert_{L^{\infty}}\leq\varepsilon$. Since $f_j,f$ are continuous and $X_0\subseteq \supp(\mu)$, one has $T^nx_0\notin\{x\in X: \vert f_j(x)-f(x)\vert>\varepsilon\}$ for every $n\in\Z$. For $j_0$ there exists $i_0\in\N$ such that $\big\vert\frac{1}{|\Phi_{N_i}|}\sum\limits_{u\in\Phi_{N_i}}T^{P(u)}
f_{j_0}(x_0)-\frac{1}{|\Phi_{N_m}|}\sum\limits_{u\in\Phi_{N_m}}T^{P(u)}f_{j_0}(x_0)\big\vert<\varepsilon$ for every $i,m>i_0$. Then by triangle inequality one has
$$\bigg\vert\frac{1}{|\Phi_{N_i}|}\sum\limits_{u\in\Phi_{N_i}}T^{P(u)}
f(x_0)-\frac{1}{|\Phi_{N_m}|}\sum\limits_{u\in\Phi_{N_m}}T^{P(u)}f(x_0)\bigg\vert<3\varepsilon$$
for every $i,m>i_0$. Thus the average
$$\frac{1}{|\Phi_{N_i}|}\sum\limits_{u\in\Phi_{N_i}}T^{P(u)}f(x)$$
converges as $i\to\infty$ for every $f\in C(X)$ and $x\in X_0$.

For every $x\in X_0$, we set $L_x:C(X)\to\R$, $f\to\lim\limits_{i\to\infty}\frac{1}{|\Phi_{N_i}|}\sum\limits_{u\in\Phi_{N_i}}T^{P(u)}f(x)$. Then $L_x$ is a positive linear function with $L_x(1)=1$. By Riesz Representation Theorem, there exists $\tau_z\in \mathcal{M}(X)$ such that for any $f\in C(X)$ one has $L_x(f)=\int fd\tau_x$. And
\begin{align*}
\int\int fd\tau_xd\mu(x)&=\int\lim\limits_{i\to\infty}\frac{1}{|\Phi_{N_i}|}\sum\limits_{u\in\Phi_{N_i}}T^{P(u)}f_(x)d\mu(x)\\
&=\lim\limits_{i\to\infty}\frac{1}{|\Phi_{N_i}|}\sum\limits_{u\in\Phi_{N_i}}\int T^{P(u)}f_(x)d\mu(x)\\
&=\int fd\mu(x),
\end{align*}
the second equation holds because $f$ is bounded. This ends the proof.
\end{proof}

\begin{thm}\label{characteristic}Let $s\in\N$, $P:\Z^s\to\Z$ be a non-constant polynomial, and $(X,\mathscr{B}_X,$ $\mu,T)$ be a measure preserving system with the Pinsker $\sigma$-algebra $P_{\mu}(T)$. Then for any $f\in L^{\infty}(\mu)$, one has
$$\frac{1}{|\Phi_N|}\sum\limits_{u\in\Phi_N}T^{P(u)}f-\frac{1}{|\Phi_{N}|}\sum\limits_{u\in\Phi_N}T^{P(u)}E(f|P_{\mu}(T))\to 0,$$
as $N\to\infty$ in $L^2(\mu)$.
\end{thm}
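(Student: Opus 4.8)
The plan is to prove the sharper statement that $P_{\mu}(T)$ is a \emph{characteristic factor} for these averages. Writing $A_Nh:=\frac{1}{|\Phi_N|}\sum_{u\in\Phi_N}T^{P(u)}h$ and setting $g:=f-E(f|P_{\mu}(T))$, I first reduce the theorem to the assertion that $A_Ng\to 0$ in $L^2(\mu)$. Indeed, since $T^{-1}P_{\mu}(T)=P_{\mu}(T)$, the conditional expectation commutes with each $T^{P(u)}$, so $A_NE(f|P_{\mu}(T))=E(A_Nf|P_{\mu}(T))$ and hence $A_Nf-A_NE(f|P_{\mu}(T))=A_Ng$, where $E(g|P_{\mu}(T))=0$ by the tower property (item (4) of the conditional expectation properties). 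Thus it suffices to show: if $g\in L^{\infty}(\mu)$ satisfies $E(g|P_{\mu}(T))=0$, then $A_Ng\to 0$ in $L^2(\mu)$.

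The conceptual heart is a correlation-decay estimate: $\lim_{|n|\to\infty}\langle T^ng,g\rangle=0$ whenever $E(g|P_{\mu}(T))=0$. To prove it I would invoke the Rokhlin--Sinai Theorem \ref{Rokhlin-Sinai} to obtain $\mathscr{P}$ with $T^{-1}\mathscr{P}\subset\mathscr{P}$, $\bigvee_{k\ge0}T^k\mathscr{P}=\mathscr{B}_X$ and $\bigcap_{m\ge0}T^{-m}\mathscr{P}=P_{\mu}(T)$. Given $\varepsilon>0$, since the $T^k\mathscr{P}$ increase to $\mathscr{B}_X$, Theorem \ref{function limits} lets me pick $k$ with $\|g-g_0\|_2<\varepsilon$ for $g_0:=E(g|T^k\mathscr{P})$; because $P_{\mu}(T)\subset\mathscr{P}\subset T^k\mathscr{P}$, the tower property still gives $E(g_0|P_{\mu}(T))=0$. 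For $n\ge m+k$ the function $T^ng_0$ is $T^{-m}\mathscr{P}$-measurable, so by the defining property of conditional expectation $\langle T^ng_0,g_0\rangle=\langle T^ng_0,E(g_0|T^{-m}\mathscr{P})\rangle$, whence $|\langle T^ng_0,g_0\rangle|\le\|g_0\|_2\,\|E(g_0|T^{-m}\mathscr{P})\|_2$. Since the $T^{-m}\mathscr{P}$ decrease to $P_{\mu}(T)$, Theorem \ref{function limits} forces $\|E(g_0|T^{-m}\mathscr{P})\|_2\to\|E(g_0|P_{\mu}(T))\|_2=0$, giving decay as $n\to+\infty$; the case $n\to-\infty$ is identical since $\langle T^ng,g\rangle=\overline{\langle T^{-n}g,g\rangle}$, and the $\varepsilon$-approximation transfers the estimate from $g_0$ to $g$.

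With correlation decay in hand the conclusion follows by a second-moment (van der Corput) computation. By Theorem \ref{Leibman} the averages $A_Ng$ converge in $L^2$ to some $\tilde g$, so $\|\tilde g\|_2^2=\lim_N\|A_Ng\|_2^2$, and expanding the square while using that the Koopman operator is unitary gives
$$\|A_Ng\|_2^2=\frac{1}{|\Phi_N|^2}\sum_{u,v\in\Phi_N}\langle T^{\,P(u)-P(v)}g,\,g\rangle.$$
I would split this double average according to whether $|P(u)-P(v)|\le M$ or $>M$. On the second range every summand is at most $\delta_M:=\sup_{|n|>M}|\langle T^ng,g\rangle|$ in modulus, and $\delta_M\to0$ by the previous paragraph; on the first range each summand is bounded by $\|g\|_2^2$, so its contribution is controlled by the density of $\{(u,v)\in\Phi_N\times\Phi_N:|P(u)-P(v)|\le M\}$. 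Hence $\|\tilde g\|_2^2\le\delta_M$ for every $M$, provided this density tends to $0$, and letting $M\to\infty$ yields $\tilde g=0$.

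The main obstacle is therefore the level-set estimate that for each $M$ the set $\{w\in\Z^{2s}:|Q(w)|\le M\}$, with $Q(u,v):=P(u)-P(v)$ a non-constant polynomial on $\Z^{2s}$, has upper Banach density $0$; since the upper density along the product F$\phi$lner sequence $\{\Phi_N\times\Phi_N\}$ of $\Z^{2s}$ is dominated by the upper Banach density, the required density does tend to $0$. Establishing this thinness for an arbitrary non-constant polynomial is the one genuinely separate ingredient: I would reduce it, by fixing all but one coordinate and using that a non-constant one-variable polynomial assumes each bounded value only finitely often, together with a Fubini argument across the F$\phi$lner sets, to the one-dimensional case. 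This is elementary but must be carried out with care, since the F$\phi$lner sequence is arbitrary.
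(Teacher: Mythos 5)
Your proposal is correct and follows essentially the same route as the paper: Rokhlin--Sinai together with martingale convergence (Theorem \ref{function limits}), a second-moment expansion of $\|\cdot\|_{L^2}^2$, and the fact that the level set $\{(u,v):|P(u)-P(v)|\le M\}$ has vanishing density along $\Phi_N\times\Phi_N$; your packaging of the paper's term-by-term cancellation ($A_{uv}=C_{uv}$, $B_{uv}=D_{uv}$) as a correlation-decay statement $\langle T^ng,g\rangle\to0$ for $g$ with $E(g|P_{\mu}(T))=0$ is only a reorganization of the same computation. The level-set density estimate you rightly flag as the delicate point is the one the paper handles via Lemma \ref{density} (reduction to translated cubes), and there the paper's own counting bound is also stated without treating the degenerate lines on which $P(u)-P(v)$ becomes constant in the chosen coordinate, so your caveat applies equally to the published argument.
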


\begin{proof}By Theorem \ref{Rokhlin-Sinai}, there exists a sub-$\sigma$-algebra $\mathscr{P}$ of $\mathscr{B}_X$ such that $T^{-1}\mathscr{P}\subset\mathscr{P}$, $\bigvee\limits_{k=0}^{\infty}T^k\mathscr{P}=\mathscr{B}_X$ and $\bigcap\limits_{n=0}^{\infty}T^{-k}\mathscr{P}=P_{\mu}(T)$. For any given $f\in L^{\infty}(\mu)$, firstly, we assume $f$ is $\mathscr{P}$-measurable. Since $f\in L^{\infty}(\mu)$, without lost of generality, we can assume $ \bigl\Vert f\bigr\Vert\leq 1$. Let $f^{\infty}$ denote $E(f|P_{\mu}(T))$ and $f^n$ denote $E(f|T^{-n}\mathscr{P})$ for $n\in\N$. For any $0<\varepsilon<\frac{1}{2}$, by Theorem \ref{function limits}, there exists $m\in\N$ such that $||f^m-f^{\infty}||_{L^2(\mu)}<\varepsilon$. Then for every $N\in\N$
\begin{footnotesize}
\begin{align}\label{e1}
&\bigg\Vert\frac{1}{|\Phi_N|}\sum\limits_{u\in\Phi_N}T^{P(u)}f-\frac{1}{|\Phi_N|}\sum\limits_{u\in\Phi_{N}}T^{P(u)}f^{\infty} \bigg\Vert_{L^2(\mu)}\nonumber\\
&\leq\bigg\Vert\frac{1}{|\Phi_N|}\sum\limits_{u\in\Phi_N}T^{P(u)}f-\frac{1}{|\Phi_{N}|}\sum\limits_{u\in\Phi_{N}}T^{P(u)}f^{m} \bigg\Vert_{L^2(\mu)}+
\bigg\Vert\frac{1}{|\Phi_N|}\sum\limits_{u\in\Phi_N}T^{P(u)}(f^m-f^{\infty}) \bigg\Vert_{L^2(\mu)}\nonumber\\
&\leq\bigg\Vert\frac{1}{|\Phi_N|}\sum\limits_{u\in\Phi_N}T^{P(u)}f-\frac{1}{|\Phi_N|}\sum\limits_{u\in\Phi_N}T^{P(u)}f^{m} \bigg\Vert_{L^2(\mu)}+\varepsilon.
\end{align}
\end{footnotesize}

Let $E=\{(u,v)\in\Z^s\times\Z^s\colon|P(u)-P(v)|\leq m\}$, then by Lemma \ref{density}, for the F$\phi$lner sequence $\{\Phi_n':=[-n,n]^{s}\times[-n,n]^s\}$ there exists $\{t_n\}\subset\Z^{2s}$ such that $d^*(E)=\overline{d}_{\{\Phi_n'+t_n\}}(E)$. For any $n\in\N$, we have $\big\vert E\cap(\Phi_n'+t_n)\big\vert\leq (2n+1)^{(2s-1)}(2m+1)k$, where $k$ is the degree of $P$. Then
$$d^*(E)=\overline{d}_{\{\Phi_n'+t_n\}}\leq\limsup\frac{(2n+1)^{(2s-1)}(2m+1)k}{(2n+1)^{2s}}\to 0$$
as $n\to\infty$. This means $\overline{d}_{\{\Phi_n\times\Phi_n\}}(E)=0$. Thus there exists $N_1\in\N$ such that $\frac{|E\cap(\Phi_N\times\Phi_N)|}{|\Phi_N|^2}<\frac{\varepsilon^2}{4}$ holds for any $N>N_1$. Then for every $N>N_1$ we have
\begin{align}\label{e2}
&\bigg\Vert\frac{1}{|\Phi_N|}\sum\limits_{u\in\Phi_N}T^{P(u)}f-\frac{1}{|\Phi_N|}\sum\limits_{u\in\Phi_N}T^{P(u)}f^{m} \bigg\Vert_{L^2(\mu)}^2\nonumber\\
&=\frac{1}{|\Phi_{N}|^2}\sum\limits_{u,v\in\Phi_N}\int(T^{P(u)}f-T^{P(u)}f^m)(T^{P(v)}f-T^{P(v)}f^m) d\mu\nonumber\\
&=\frac{1}{|\Phi_{N}|^2}\sum\limits_{u,v\in\Phi_N}\bigl(A_{uv}+B_{uv}-C_{uv}-D_{uv}\bigr),
\end{align}
where
$$A_{uv}=\int T^{P(u)}f\cdot T^{P(v)}fd\mu,$$
$$B_{uv}=\int T^{P(u)}f^m\cdot T^{P(v)}f^{m}d\mu,$$
$$C_{uv}=\int T^{P(u)}f\cdot T^{P(v)}f^md\mu,$$
$$D_{uv}=\int T^{P(u)}f^m\cdot T^{P(v)}fd\mu.$$
For $(u,v)\notin E_N$, if $P(u)>P(v)+m$, we have
\begin{small}
\begin{align*}
A_{uv}&=\int E(T^{P(u)-P(v)}f\cdot f\big|T^{-m}\mathscr{P})d\mu\\
&=\int T^{P(u)-P(v)}f\cdot E(f\big|T^{-m}\mathscr{P})d\mu\\
&=C_{uv},
\end{align*}
\end{small}
and
\begin{small}
\begin{align*}
D_{uv}&=\int E\left(T^{P(u)}f^m\cdot T^{P(v)}f\big|T^{-\left(P(v)+m\right)}\mathscr{P}\right)d\mu\\
&=\int T^{P(u)}f\cdot E(T^{P(v)}f\big|T^{-(P(v)+m))}\mathscr{P})\\
&=B_{uv}.
\end{align*}
\end{small}
Similarly, when $P(v)>P(u)+m$ we have $A_{uv}=D_{uv}$ and $B_{uv}=C_{uv}$. Moreover, since $\Vert f\Vert_{\infty}=1$, we have $|A_{uv}|$, $|B_{uv}|$, $|C_{uv}|$, $|D_{uv}|\leq 1$. Thus we have
\begin{footnotesize}
\begin{align*}
(\ref{e2})&\leq\frac{1}{|\Phi_N|^2}\left(\bigg|\sum\limits_{(u,v)\in F_N}\left(A_{uv}+B_{uv}-C_{uv}-D_{uv}\right)\bigg|+\sum\limits_{(u,v)\in E_N}\left(A_{uv}+B_{uv}-C_{uv}-D_{uv}\right)\right)\\
&=\frac{1}{|\Phi_N|^2}\left(\sum\limits_{(u,v)\in E_N}\big\vert A_{uv}+B_{uv}-C_{uv}-D_{uv}\big\vert\right)\\
&\leq4\frac{1}{|\Phi_N|^2}\leq\varepsilon^2,
\end{align*}
\end{footnotesize}
Where $F_N=(\Phi_N\times\Phi_N)\setminus E$ and $E_N=E\cap(\Phi_N\times\Phi_N)$. Then $(\ref{e1})\leq 2\varepsilon$ for any $N>N_1$. Thus the conclusion holds for
$\mathcal{P}$-measurable functions in $L^{\infty}(\mu)$ because of the arbitrary of $\varepsilon$, also for $T^r\mathcal{P}$-measurable functions, $r\in\N$, for $\mu$ is $T$-invariant. For general functions $f\in L^{\infty}(\mu)$, there exists $T^k\mathcal{P}$-measurable functions $f_{k}\in L^{\infty}(\mu)$ which  satisfy the conclusion and converge to $f$ when $k\to\infty$, and we can know this conclusion holds for $f$. Thus the result holds for all functions in $L^\infty(\mu)$.
\end{proof}

\begin{lem}\label{tauup-mu}Let $s\in\N$, $P:\Z^s\to\Z$ be a non-constant polynomial, $(X,\mathscr{B}_X,\mu,T)$ be a measure preserving system with the Pinsker $\sigma$-algebra $P_{\mu}(T)$, and $\mu=\int \mu_xd\mu(x)$ be the disintegration of $\mu$ over $P_{\mu}(T)$. Then for the disintegration of $\mu$ as in Proposition \ref{disintegration}, $\mu=\int \tau_xd\mu(x)$, and every $f\in C(X)$ we have
$$\int fd\tau_x=\int\int fd\tau_yd\mu_x(y),$$
holds for $\mu$-a.e. $x\in X$.
\end{lem}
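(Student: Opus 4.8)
The plan is to express both sides of the identity through the single function $x\mapsto\int f\,d\tau_x$ and then to reduce the claim to the assertion that this function is, up to a $\mu$-null set, measurable with respect to the Pinsker $\sigma$-algebra $P_{\mu}(T)$.

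First I would set $F(x):=\int f\,d\tau_x$. By Proposition \ref{disintegration}, for every $x\in X_0$ one has $F(x)=\lim_{i\to\infty}\frac{1}{|\Phi_{N_i}|}\sum_{u\in\Phi_{N_i}}f(T^{P(u)}x)$, so $F$ is a bounded measurable function, being a pointwise limit of continuous functions on the full-measure set $X_0$. Feeding $F$ into the disintegration of $\mu$ over $P_{\mu}(T)$ and using \eqref{eq:Ef-mux}, the right-hand side of the desired identity becomes
$$\int\int f\,d\tau_y\,d\mu_x(y)=\int F(y)\,d\mu_x(y)=E\bigl(F\,\big|\,P_{\mu}(T)\bigr)(x)\quad\text{for }\mu\text{-a.e. }x.$$
Hence it suffices to prove that $F=E(F\,|\,P_{\mu}(T))$ $\mu$-almost everywhere, equivalently that $F$ agrees $\mu$-a.e. with a $P_{\mu}(T)$-measurable function.

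To identify $F$, I would compare the pointwise averages with the $L^2$ averages. Write $A_Nf:=\frac{1}{|\Phi_N|}\sum_{u\in\Phi_N}T^{P(u)}f$. By Theorem \ref{Leibman} the averages $A_Nf$ and $A_N\bigl(E(f|P_{\mu}(T))\bigr)$ both converge in $L^2(\mu)$, and by Theorem \ref{characteristic} their difference tends to $0$; therefore they share a common limit $\bar f\in L^2(\mu)$. Since the Pinsker $\sigma$-algebra is $T$-invariant, every translate $T^{P(u)}E(f|P_{\mu}(T))$ is $P_{\mu}(T)$-measurable, so each $A_N\bigl(E(f|P_{\mu}(T))\bigr)$ is $P_{\mu}(T)$-measurable, and hence so is the $L^2$-limit $\bar f$. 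It remains to match $\bar f$ with $F$: the $L^2$-convergence of $A_Nf$ to $\bar f$ persists along the subsequence $\{N_i\}$ from Proposition \ref{disintegration}, so passing to a further subsequence $\{N_{i_j}\}$ along which convergence is $\mu$-a.e. gives $\bar f(x)=\lim_{j\to\infty}A_{N_{i_j}}f(x)=F(x)$ for $\mu$-a.e. $x\in X_0$. Thus $F=\bar f$ $\mu$-a.e. is $P_{\mu}(T)$-measurable, which yields $E(F|P_{\mu}(T))=F$ and completes the proof.

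The step I expect to be the main obstacle is precisely the reconciliation of the two limiting procedures: the measures $\tau_x$, and hence $F$, are defined as \emph{pointwise} weak-$*$ limits along the fixed subsequence $\{N_i\}$, whereas the $P_{\mu}(T)$-measurability of the limit is only accessible through the \emph{$L^2$} convergence furnished by Theorems \ref{Leibman} and \ref{characteristic}. The bridge is the standard extraction of an a.e.-convergent sub-subsequence, which forces the pointwise limit $F$ to coincide $\mu$-a.e. with the $L^2$-limit $\bar f$; one should also note that $E(f|P_{\mu}(T))\in L^{\infty}(\mu)$, so that Theorem \ref{Leibman} legitimately applies to it.
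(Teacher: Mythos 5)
Your proposal is correct and follows essentially the same route as the paper: both reduce the identity to showing that $x\mapsto\int f\,d\tau_x$ is $P_{\mu}(T)$-measurable, using Theorem \ref{characteristic} to replace $f$ by $E(f|P_{\mu}(T))$ in the averages and the $T$-invariance of the Pinsker algebra to conclude measurability of the limit. The only (harmless) difference is the direction of the bridge between the two modes of convergence: the paper upgrades the a.e.\ limit along $\{N_i\}$ to an $L^2$ limit via boundedness, whereas you extract an a.e.-convergent sub-subsequence from the $L^2$ limit $\bar f$ to identify it with $F$.
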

\begin{proof}Let $\{\Phi_n\}_{n=1}^{\infty}$ be a  F$\phi$lner sequence of $\Z^s$. By Proposition \ref{disintegration}, there exists $\{N_i\}_{i=1}^{\infty}\subseteq\N$ such that
$$\frac{1}{|\Phi_{N_i}|}\sum\limits_{u\in\Phi_{N_i}}f(T^{P(u)}x)\to\int fd\tau_x$$
holds as $i\to\infty$ for $\mu$-a.e. $x\in X$ . Combining with Theorem \ref{characteristic} we have
$$\frac{1}{|\Phi_{N_i}|}\sum\limits_{u\in\Phi_{N_i}}E(f|P_{\mu}(T))(T^{P(u)}x)\to\int fd\tau_x$$
in $L^2(\mu)$ as $i\to\infty$. Thus $\int fd\tau_x$ is $P_{\mu}(T)$-measurable, this means
$$\int fd\tau_x=E(\int fd\tau_y|P_{\mu}(T))(x)=\int\int fd\tau_yd\mu_x.$$
\end{proof}
\section{Proof of Theorem \ref{mainthem1}}
In this section, we will give the proof of Theorem \ref{mainthem1}. To begin with, we introduce some definitions and properties.

Let $(X,T)$ be a topological dynamical system with the metric $\rho$. For a point $x\in X$, the \emph{stable set of $x$} is defined as
$$W^s(x,T)=\{y\in X: \lim\limits_{k\to\infty}\rho(T^kx,T^ky)=0\},$$
and the  \emph{unstable set of $x$} is defined as
$$W^u(x,T)=\{y\in X:\lim\limits_{k\to\infty}\rho(T^{-k}x,T^{-k}y)=0\}.$$
In \cite{Huang-Xu-Yi}, the authors showed the following results.
\begin{thm}\label{W^s(x,T)dense}(\cite[proof of Theorem 1]{Huang-Xu-Yi}) Let $(X,\mathscr{B},\mu,T)$ be a ergodic system with $h_{\mu}(T)>0$, $P_{\mu}(T)$ be the Pinsker $\sigma$-algebra of $(X,\mathscr{B},\mu,T)$ and $\mu=\int\mu_xd\mu(x)$ be the disintegration of $\mu$ over $P_{\mu}(T)$, then for $\mu$-a.e. $x\in X$ one has
$$\overline{W^s(x,T)\cap \supp(\mu_x)}=\supp(\mu_x)\ \  and\ \ \overline{W^u(x,T)\cap \supp(\mu_x)}=\supp(\mu_x).$$
\end{thm}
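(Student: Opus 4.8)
The plan is to prove the statement for the unstable sets $W^u(x,T)$; the stable case then follows verbatim after replacing $T$ by $T^{-1}$, since the Pinsker $\sigma$-algebra satisfies $P_{\mu}(T)=P_{\mu}(T^{-1})$, so the disintegration $\mu=\int\mu_x\,d\mu(x)$ over it is unchanged, while $W^s(x,T)=W^u(x,T^{-1})$; intersecting the two resulting full-measure sets of good $x$ gives both conclusions simultaneously. To begin I fix the Rokhlin--Sinai $\sigma$-algebra $\mathscr{P}$ furnished by Theorem \ref{Rokhlin-Sinai}, so that $T^{-1}\mathscr{P}\subset\mathscr{P}$, $\bigvee_{k\geq0}T^k\mathscr{P}=\mathscr{B}_X$ and $\bigcap_{n\geq0}T^{-n}\mathscr{P}=P_{\mu}(T)$.

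First I would disintegrate $\mu$ over the decreasing family $\{T^{-n}\mathscr{P}\}_{n\geq0}$, writing $\mu=\int\mu_x^{(n)}\,d\mu(x)$ with $\mu_x^{(n)}=\mu_x^{T^{-n}\mathscr{P}}$ in the sense of \eqref{eq:Ef-mux}. Since $T^{-n}\mathscr{P}\downarrow P_{\mu}(T)$, Theorem \ref{function limits} gives $E(f\mid T^{-n}\mathscr{P})\to E(f\mid P_{\mu}(T))$ in $L^1(\mu)$ and $\mu$-a.e.\ for every $f\in C(X)$; applying this to a countable dense subset of $C(X)$ and invoking \eqref{eq:Ef-mux} yields, for $\mu$-a.e.\ $x$, the convergence $\mu_x^{(n)}\to\mu_x$ in the weak$^*$ topology. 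Moreover, because $P_{\mu}(T)\subseteq T^{-n}\mathscr{P}$, the finer $\sigma$-algebra has smaller atoms, so the $T^{-n}\mathscr{P}$-atom of $x$ is contained in its $P_{\mu}(T)$-atom; as $\mu_x^{(n)}$ and $\mu_x$ are carried by these respective atoms, taking closures gives $\supp(\mu_x^{(n)})\subseteq\supp(\mu_x)$ for $\mu$-a.e.\ $x$ and every $n$.

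The geometric heart of the argument is the claim that, for $\mu$-a.e.\ $x$ and every $n$, every point of $\supp(\mu_x^{(n)})$ lies in $W^u(x,T)$, i.e.\ agrees with $x$ in the remote past. To establish this I would realize $\mathscr{P}$ through a topological generator: choose a refining sequence of finite Borel partitions $\beta_1\prec\beta_2\prec\cdots$ with $\mathrm{diam}(\beta_j)\to0$ and $\bigvee_{i\in\Z}T^i\beta_j\nearrow\mathscr{B}_X$ recovering the topology, arranged so that the associated one-sided past has remote intersection exactly $P_{\mu}(T)$. Two points lying in a common atom of $T^{-n}\mathscr{P}$ then share the same name at all times $\leq -n$, so that $T^{-k}x$ and $T^{-k}z$ fall in a common cell for every $k\geq n$; letting the resolution tend to zero along the refinement forces $\rho(T^{-k}x,T^{-k}z)\to0$, that is $z\in W^u(x,T)$. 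Since $\mu_x^{(n)}$ is carried by the $T^{-n}\mathscr{P}$-atom of $x$, this gives $\supp(\mu_x^{(n)})\subseteq W^u(x,T)$, a genuine membership rather than mere closeness at a fixed scale once the multi-scale bridge is made precise.

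Granting the claim, density is immediate. Fix a typical $x$, a point $y\in\supp(\mu_x)$, and $\varepsilon>0$. Then $\mu_x(B(y,\varepsilon))>0$, so by the portmanteau theorem $\liminf_n\mu_x^{(n)}(B(y,\varepsilon))\geq\mu_x(B(y,\varepsilon))>0$, and hence for large $n$ there exists $z\in\supp(\mu_x^{(n)})\cap B(y,\varepsilon)$. By the previous two paragraphs $z\in W^u(x,T)\cap\supp(\mu_x)$ with $\rho(y,z)<\varepsilon$, proving that $W^u(x,T)\cap\supp(\mu_x)$ is dense in $\supp(\mu_x)$; the unstable statement follows, and the stable one by the symmetry noted above. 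I expect the main obstacle to be precisely the geometric claim of the third paragraph: the passage from the purely measure-theoretic Rokhlin--Sinai algebra to a topologically faithful generator whose remote past is exactly $P_{\mu}(T)$, together with the verification that a single diagonal choice of the partitions $\{\beta_j\}$ can be made compatible simultaneously with the metric and with the Pinsker identity $\bigcap_n T^{-n}\mathscr{P}=P_{\mu}(T)$, so that membership in the atoms truly forces metric backward-asymptoticity. Positivity of entropy enters only to guarantee $P_{\mu}(T)\subsetneq\mathscr{B}_X$, so that the fibers $\supp(\mu_x)$ are typically nontrivial and the conclusion carries content.
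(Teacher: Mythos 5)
First, a point of order: the paper does not actually prove Theorem \ref{W^s(x,T)dense} --- it is imported verbatim from the proof of Theorem 1 in \cite{Huang-Xu-Yi} --- so there is no in-paper argument to compare against, and your proposal has to be judged against the known proof of that result (going back to Blanchard--Host--Ruette and refined in \cite{Huang-Xu-Yi}). Your outer skeleton does match that proof: reduce $W^s$ to $W^u$ via $T\mapsto T^{-1}$ and $P_\mu(T)=P_\mu(T^{-1})$, take the Rokhlin--Sinai algebra $\mathscr{P}$ of Theorem \ref{Rokhlin-Sinai}, use the decreasing martingale theorem (Theorem \ref{function limits}) to get $\mu_x^{T^{-n}\mathscr{P}}\to\mu_x$ weak$^*$ for a.e.\ $x$, and then convert positive mass of $B(y,\varepsilon)$ under $\mu_x^{(n)}$ into a point of the stable/unstable set near $y$. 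That part is fine.

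The gap is exactly where you flag it, and it is not a technicality that a ``multi-scale bridge'' will repair: the claim $\supp\bigl(\mu_x^{T^{-n}\mathscr{P}}\bigr)\subseteq W^u(x,T)$ is the entire theorem in disguise, and the mechanism you propose for it fails in a general (non-expansive) system. A Rokhlin--Sinai algebra is generated from one partition $\alpha$ of some fixed positive diameter; lying in the $T^{-n}\mathscr{P}$-atom of $x$ only forces $\rho(T^{-k}x,T^{-k}z)\le\mathrm{diam}(\alpha)$ for $k\ge n$, i.e.\ closeness at a single fixed scale, never asymptoticity. If you refine to partitions $\beta_j$ of smaller diameter you obtain a \emph{different} algebra $\mathscr{P}_j$ with strictly smaller atoms, and a point $z$ in the atom of $x$ for $\mathscr{P}_1$ has no reason to lie in any atom of $x$ for $\mathscr{P}_2$; there is no single $\sigma$-algebra whose atoms are the stable sets, so the intersection over scales may well reduce to $\{x\}$. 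Moreover the containment of the full support in $W^u(x,T)$ is stronger than what is true: the actual proof establishes only the measure-theoretic statement that $\mu_x^{\mathscr{P}}\bigl(W^s(x,T)\bigr)=1$ for $\mu$-a.e.\ $x$, keeping one fixed $\mathscr{P}$ and handling a sequence of scales $\varepsilon_m\downarrow 0$ by combining the \emph{increasing} martingale $\mu_x^{T^k\mathscr{P}}\to\delta_x$, the equivariance $T_*\mu_x^{\mathscr{A}}=\mu_{Tx}^{T\mathscr{A}}$, and a Borel--Cantelli/maximal-inequality argument to make the exceptional sets summable in $k$; a full-measure subset of $\supp(\mu_x^{\mathscr{P}})$ is automatically dense in it, which is all the density argument needs. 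One further small inaccuracy: positive entropy does not merely guarantee $P_\mu(T)\subsetneq\mathscr{B}_X$ for ``content'' --- it is used quantitatively (via $h_\mu(T,\alpha)>0$) to show that the conditional measures $\mu_x^{\mathscr{P}}$ are nonatomic on a positive-measure set, which is what makes the Borel--Cantelli estimates nontrivial.
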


Let $(X,\mathscr{B}_X,\mu)$ be a Borel probability space, $\mathscr{A}\subseteq\mathscr{B}_X$ a $\sigma$-algebra, and $\mu=\int \mu_x^{\mathscr{A}} d\mu(x)$ be the disintegrated over $\mathscr{A}$ of $\mu$. $\mu\times_{\mathscr{A}}\mu$ is a measure on $(X\times X,\mathscr{B}_X\times\mathscr{B}_X)$ defined by
\[\mu\times_{\mathscr{A}}\mu(A)=\int \mu_x^{\mathscr{A}}\times\mu_x^{\mathscr{A}}(A)d\mu(x).
\]
The following theorem is a classic result(see \cite[Theorem 0.4(iii)]{Danilenko} and \cite[lemma 4.2]{Huang-Xu-Yi} see also
\cite[Theorem 4]{Glasner-Thouvenot-Weiss} for free action).
\begin{thm}\label{Pinsker algebra}Let $(X,\mathscr{B},\mu,T)$ be a ergodic system with the Pinsker $\sigma$-algebra $P_{\mu}(T)$. If $\lambda=\mu\times_{P_{\mu}(T)}\mu$ and $\pi:X\times X\to X$ is the canonical projection to the first factor, then $P_{\lambda}\left(T\times T\vert \pi^{-1}P_{\mu}(T)\right)=\pi^{-1}(P_{\mu}(T))$(mod $\lambda$).
\end{thm}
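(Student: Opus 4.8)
The plan is to reduce the statement to a triviality property of a tail $\sigma$-algebra, computed through a relativized Rokhlin--Sinai theorem, and to locate the real work in showing that this tail collapses to the base. Throughout write $\mathscr{P}=P_\mu(T)$ and $\mathscr{F}=\pi^{-1}\mathscr{P}$. The inclusion $\mathscr{F}\subseteq P_\lambda(T\times T\,|\,\mathscr{F})$ is automatic, since any $T\times T$-invariant factor has zero relative entropy over itself; so the whole content is the reverse inclusion $P_\lambda(T\times T\,|\,\mathscr{F})\subseteq\mathscr{F}$, i.e.\ the assertion that the extension $(X\times X,\lambda,T\times T)\to(X/\mathscr{P},T)$ is relatively Kolmogorov.

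First I would produce a one-sided generator. Applying Theorem \ref{Rokhlin-Sinai} to $(X,\mathscr{B},\mu,T)$ yields $\mathscr{Q}\subseteq\mathscr{B}$ with $T^{-1}\mathscr{Q}\subseteq\mathscr{Q}$, $\bigvee_{k\ge0}T^{k}\mathscr{Q}=\mathscr{B}$ and $\bigcap_{k\ge0}T^{-k}\mathscr{Q}=\mathscr{P}$. On $X\times X$ I set $\mathscr{R}=\mathscr{Q}\otimes\mathscr{Q}$ (read modulo $\lambda$). Since $\lambda$ is a relative product over $\mathscr{P}$, the two coordinates carry the same $\mathscr{P}$-value $\lambda$-a.e., hence $\pi_1^{-1}\mathscr{P}=\pi_2^{-1}\mathscr{P}=\mathscr{F}$ and $\mathscr{F}=\mathscr{P}\otimes\mathscr{P}\subseteq\mathscr{R}$. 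A direct check gives $(T\times T)^{-1}\mathscr{R}\subseteq\mathscr{R}$ and $\bigvee_{k\ge0}(T\times T)^{k}\mathscr{R}=\mathscr{B}\otimes\mathscr{B}$, so $\mathscr{R}$ is a one-sided generator for $T\times T$ lying above the invariant factor $\mathscr{F}$. The relative form of the Rokhlin--Sinai theorem then identifies the relative Pinsker algebra with the remote past: $P_\lambda(T\times T\,|\,\mathscr{F})=\bigcap_{k\ge0}(T\times T)^{-k}\mathscr{R}=\bigcap_{k\ge0}\bigl(T^{-k}\mathscr{Q}\otimes T^{-k}\mathscr{Q}\bigr)$.

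It therefore remains to prove the tail identity $\bigcap_{k\ge0}(T^{-k}\mathscr{Q}\otimes T^{-k}\mathscr{Q})=\mathscr{P}\otimes\mathscr{P}=\mathscr{F}$ modulo $\lambda$. The inclusion $\supseteq$ is clear from $\bigcap_k T^{-k}\mathscr{Q}=\mathscr{P}$. For $\subseteq$ I would argue in $L^2(\lambda)$: a function fixed by all the conditional expectations $E(\,\cdot\,|\,T^{-k}\mathscr{Q}\otimes T^{-k}\mathscr{Q})$ must, by the defining relative independence of the two coordinates over $\mathscr{F}$ together with the relative-Kolmogorov property $\bigcap_k T^{-k}\mathscr{Q}=\mathscr{P}$ of each single coordinate, be $\mathscr{F}$-measurable. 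Concretely, one tests against product functions $g\otimes h$, uses $\int g\otimes h\,d\lambda=\int E(g|\mathscr{P})\,E(h|\mathscr{P})\,d\mu$, and pushes the conditioning time $k\to\infty$ so that each coordinate tail contributes nothing beyond $\mathscr{P}$; a reverse-martingale passage then upgrades this from product functions to arbitrary tail-measurable functions.

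I expect this last step---the collapse of the product tail to $\mathscr{F}$---to be the main obstacle: it is exactly the assertion that relative complete positive entropy is preserved under the relative self-joining over the Pinsker base, and it is here that the relative independence built into $\lambda$ must be combined with the triviality of each coordinate's own tail. This is precisely the mechanism underlying the cited results of Danilenko and of Glasner--Thouvenot--Weiss, and once it is in hand the remaining bookkeeping (the relativized Rokhlin--Sinai identification and the $\sigma$-algebra computations of the second paragraph) is routine.
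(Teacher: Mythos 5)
The first thing to note is that the paper does not actually prove this statement: Theorem \ref{Pinsker algebra} is quoted as a known result, with references to Danilenko, to Huang--Xu--Yi, and to Glasner--Thouvenot--Weiss, and the only argument the paper supplies nearby is the short deduction, placed after the statement, that the relative identity upgrades to $P_{\lambda}(T\times T)=\pi^{-1}(P_{\mu}(T))$. So your attempt cannot be matched against an in-paper proof and has to be judged on its own. Your architecture is the standard one, and it is essentially the mechanism inside the cited references: identify the relative Pinsker algebra with a remote past via Rokhlin--Sinai, then show that the tail of the product filtration $T^{-k}\mathscr{Q}\otimes T^{-k}\mathscr{Q}$ collapses to $\mathscr{P}\otimes\mathscr{P}=\mathscr{F}$ using the relative independence built into $\lambda$. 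The preliminary bookkeeping (that $\pi_1^{-1}\mathscr{P}=\pi_2^{-1}\mathscr{P}=\mathscr{P}\otimes\mathscr{P}$ mod $\lambda$, and that $\mathscr{Q}\otimes\mathscr{Q}$ is exhaustive for $T\times T$) is correct.

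Two genuine gaps remain. First, the step you call ``the relative form of the Rokhlin--Sinai theorem'' is misstated: Theorem \ref{Rokhlin-Sinai} is an existence statement for one particular exhaustive algebra, and it is false that the tail of an \emph{arbitrary} exhaustive algebra equals the (relative) Pinsker algebra --- $\mathscr{R}=\mathscr{B}_X\otimes\mathscr{B}_X$ is exhaustive and its tail is everything. What you actually need, and what is true, is only the containment $P_{\lambda}(T\times T\,|\,\mathscr{F})\subseteq\bigcap_{k}(T\times T)^{-k}\mathscr{R}$; since $\mathscr{F}$ is a zero-entropy factor this reduces to the standard fact that the absolute Pinsker algebra lies in the remote past of \emph{every} exhaustive sub-$\sigma$-algebra, which is strictly stronger than the quoted Theorem \ref{Rokhlin-Sinai} and needs its own citation or proof. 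Second, the decisive lemma --- that $\bigcap_{k}\bigl(T^{-k}\mathscr{Q}\otimes T^{-k}\mathscr{Q}\bigr)=\mathscr{F}$ mod $\lambda$ --- is only gestured at and then explicitly deferred to Danilenko and Glasner--Thouvenot--Weiss, so as a self-contained argument the attempt stops exactly where the content is. That said, this lemma is completable along the lines you indicate: for $\mathscr{P}\subseteq\mathscr{A}\cap\mathscr{B}$ relative independence gives $E_{\lambda}(f\otimes g\,|\,\mathscr{A}\otimes\mathscr{B})=E_{\mu}(f|\mathscr{A})\otimes E_{\mu}(g|\mathscr{B})$, the decreasing-martingale theorem (Theorem \ref{function limits}) sends this to $E(f|\mathscr{P})\otimes E(g|\mathscr{P})$ as $k\to\infty$, and density of finite sums of product functions in $L^{2}(\lambda)$ finishes it. If you write out those two pieces, your proof is complete and is in fact more self-contained than the paper's bare citation.
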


Note that, under the above settings, for every $A\in P_{\lambda}\left(T\times T\vert\pi^{-1}(P_{\mu}(T))\right)$, there exists $A_0\in P_{\mu}(T)$ such that $A=\pi^{-1}(A_0)=A_0\times X\ (mod\ \lambda)$. Then one has
\begin{align*}
h_{\lambda}(T\times T,\{A,(X\times X)\setminus A\})&=h_{\lambda}(T\times T, \{A_0\times X, (X\setminus A_0)\times X\})\\
&=h_{\mu}(T,\{A_0, X\setminus A_0\})=0.
\end{align*}
This means $A\in P_{\lambda}(T\times T)$. Thus $P_{\lambda}\left(T\times T\right)=\pi^{-1}(P_{\mu}(T))$(mod $\lambda$).

Now we are ready to give the proof Theorem \ref{mainthem1}.
\begin{proof}[Proof of Theorem \ref{mainthem1}]Since $h_{top}(X,T)>0$, there exists $\mu\in \mathcal{M}^e(X,T)$ such that $h_{\mu}(X,T)>0$. Let $P_{\mu}(T)$ be the Pinsker $\sigma$-algebra and $\mu=\int \mu_xd\mu(x)$ be the disintegration of $\mu$ over $P_{\mu}(T)$. We set $\lambda= \mu\times_{P_{\mu}(T)}\mu$, then for the measure preserving system $(X\times X,\mathscr{B}_X\times\mathscr{B}_X,\lambda,T\times T)$, by Theorem \ref{disintegration}, there exist $\{N_i\}_{i=1}^{\infty}$ and a disintegration $\lambda=\int \tau_Zd\lambda(z)$ of $\lambda$, such that
$$\frac{1}{|\Phi_{N_i}|}\sum\limits_{u\in\Phi_{N_i}}f(T^{P{u}}x_1,T^{P(u)}x_2)\to\int fd\tau_{(x_1,x_2)}$$
holds for every $f\in C(X\times X)$ and $\lambda$-a.e. $(x_1,x_2)$.

By Theorem \ref{Pinsker algebra}, we know the Pinsker $\sigma$-algebra of $(X\times X,\mathscr{B}_X\times \mathscr{B}_X,\lambda, T\times T)$ is $\pi^{-1}(P_{\mu}(T))$, where $\pi:X\times X\to X$, $(x_1,x_2)\to x_1$ is the canonical projection to the first coordinate. Thus $\lambda=\int \mu_x\times\mu_xd\mu(x)$ can be also regarded as the disintegration of $\lambda$ over $\pi^{-1}(P_{\mu}(T))$. Then by Lemma \ref{tauup-mu} one has
$$\int fd\tau_{(z_1,z_2)}=\int\int fd\tau_{(y_1,y_2)}d\mu_{z_1}\times\mu_{z_1}(y_1,y_2)$$
for every $f\in C(X^2)$ and $\lambda$-a.e. $(z_1,z_2)\in x^2$. This means $\int fd\tau_{(z_1,z_2)}$ is constant for $\supp(\mu_x)\times \supp(\mu_x)$-a.e. $(z_1,z_2)$ and $\mu$-a.e. $x\in X$.

We set $f_0(x_1,x_2)=\rho(x_1,x_2)$, then $f_0(x_1,x_2)>0$ for every $(x_1,x_2)\notin \Delta_X:=\{(x,x)\colon x\in X\}$. Since $\lambda(\Delta_X)=0$, we have
$$0<\int f_0d\lambda=\int\int\int f_0d\tau_{(y_1,y_2)}d\mu_z\times\mu_zd\mu(z).$$
Then there exist a subset $X_2$ of $X$ with $\mu(X_2)>0$ and a constant $c>0$ such that
$\int\int f_0d\tau_{(y_1,y_2)}d\mu_z\times\mu_z(y_1,y_2)>c$ for any $z\in X_2$. Thus $\int f_0d\tau_{(y_1,y_2)}>c$ for $\mu_z\times\mu_z$-a.e. $(y_1,y_2)$. This is
$$\frac{1}{|\Phi_{N_i}|}\sum\limits_{u\in\Phi_{N_i}}\rho(T^{P(u)}x_1,x_2)\to\int fd\tau_{x_1,x_2}>c>0$$
for $\mu_z\times\mu_z$-a.e. $(x_1,x_2)\in X\times X$ and any $z\in X_2$. We set
$$A=\{(x_1,x_2)\in X\times X: \limsup_{N\to\infty}\frac{1}{|\Phi_{N}|}\sum\limits_{u\in\Phi_{N}}\rho(T^{P(u)}x_1,T^{P(u)}x_2)>c\},$$
it is a $G_\delta$ subset of $X\times X$. And $A\cap(\supp(\mu_z)\times\supp(\mu_z))$ is dense $G_\delta$ subset of $(\supp{\mu_z}\times\supp{\mu_z})$ for every $z\in X_2$.

By Theorem \ref{W^s(x,T)dense}, we know there exists a subset $X_3$ of $X$ with $\mu(X_3)=1$ such that for every $x\in X_3$
$$\overline{W^s(x,T)\cap\supp(u_x)}=\supp(\mu_x).$$
We set
$$B=\{(x_1,x_2)\in X\times X: \liminf_{N\to\infty}\frac{1}{|\Phi_{N}|}\sum\limits_{u\in\Phi_{N}}\rho(T^{P(u)}x_1,T^{P(u)}x_2)=0\},$$
it is a $G_{\delta}$ subset of $X\times X$ and now we shall show $W^s(x,T)\times W^s(x,T)\subseteq B$. Since $X$ is compact, we can assume $diam(X)=1$. For every $y_1,y_2\in W^s(x,T)$ and any $\varepsilon>0$ there exists a $K_0\in\N$ such that for every $k>K_0$ one has $d(T^ky_1,T^ky_2)<\frac{\varepsilon}{2}$.

Let $E=\{u\in\Z^s: P(u)\leq K_0\}$, then by Lemma \ref{density}, for the F$\phi$lner sequence $\{\Phi_n':=[-n,n]^{s}\}$ there exists $\{t_n\}\subset\Z^{s}$ such that $d^*(E)=\overline{d}_{\{\Phi_n'+t_n\}}(E)$. For any $n\in\N$, we have $\big\vert E\cap(\Phi_n'+t_n)\big\vert\leq (2n+1)^{(s-1)}(K_0+1)k$, where $k$ is the degree of $P$. Then
$$d^*(E)=\overline{d}_{\{\Phi_n'+t_n\}}\leq\limsup\frac{(2n+1)^{(s-1)}(K_0+1)k}{(2n+1)^{s}}\to 0$$
as $n\to\infty$. This means $\overline{d}_{\{\Phi_n\}}(E)=0$. Thus there exists $N_1\in\N$ such that $\frac{|E_N|}{|\Phi_N|}<\frac{\varepsilon}{2}$ holds for every $N>N_1$, and
\begin{align*}
&\frac{1}{|\Phi_N|}\sum\limits_{u\in\Phi_N}\rho(T^{P(u)}y_1,T^{P(u)}y_2)\\
&=\frac{1}{|\Phi_N|}\sum\limits_{u\in E_N}\rho(T^{P(u)}y_1,T^{P(u)}y_2)+\frac{1}{|\Phi_N|}\sum\limits_{u\in\Phi\backslash E_N}\rho(T^{P(u)}y_1,T^{P(u)}y_2)\\
&\leq\frac{|E_N|}{|\Phi_N|}+\frac{\varepsilon}{2}\leq\varepsilon,
\end{align*}
Where $E_N=E\cap\Phi_N$. This implies $(y_1,y_2)\in B$, thus $W^s(x,T)\times W^s(x,T)\subseteq B$.

Then $B\cap\big(\supp(\mu_x)\times\supp(\mu_x)\big)$ is a dense $G_{\delta}$ subset of $\supp(\mu_x)\times\supp(\mu_x)$ for every $x\in X_3$. Thus $A\cap B\cap\big(\supp(\mu_x)\times\supp(\mu_x)\big)$ is a dense $G_{\delta}$ subset of $\supp(\mu_x)\times\supp(\mu_x)$ for every $x\in X_2\cap X_3$. Then there exists a Cantor subset $C$ of $\supp(\mu_x)\times\supp(\mu_x)$ with $C\subseteq A\cap B\cap\big(\supp(\mu_x)\times\supp(\mu_x)\big)$ satisfying the requirement of the Theorem \ref{mainthem1}.
\end{proof}

\begin{rem}In the above proof, for any given non-constant polynomial $Q:\Z^s\to\Z^-$, by showing $W^u(x,T)\times W^u(x,T)\subseteq B$ and noticing the fact that $\overline{W^u(x,T)\cap}$ $\overline{\supp(\mu_x)}= \supp(\mu_x)$, we can also show that $B\cap \supp(\mu_x)\times\supp(\mu_x)$ is a dense $G_{\delta}$ subset of $\supp(\mu_x)$. Hence Theorem \ref{mainthem1} also established along $Q$.
\end{rem}
\begin{rem}For a given F$\phi$lner sequence $\{\Phi_n\}_{n=1}^{\infty}$, let $t_n=\min\limits_{u\in\Phi_n}\min\{n_i,u=(n_1,n_2,\dotsc,n_s)\}$. If there exists $t_0\in\N$ such that $t_0\leq t_n$ for every $n\in\N$ (for example $\{[0,n]^s\}_{n=1}^{\infty}$). Then for any $P:\Z^s\to\Z$, Theorem \ref{mainthem1} also established.
\end{rem}

\section{mean li-yorke chaos along polynomials of prime numbers}
For studying an average over the primes, the authors of \cite{Frantzikinakis-Host-Kra} replaced this average with a certain weighted average over the integers, and in the proof of Lemma 1 they gave the following result.
\begin{lem}\cite[Proof of Lemma 1]{Frantzikinakis-Host-Kra}\label{Host-Kra} For any $\varepsilon>0$ there exists $N_0\in\N$, such that for any $N>N_0$ and map $a:\N\to\R$ with $\vert a(n)\vert\leq1$ one has
$$\bigg\vert\frac{1}{\pi(N)}\sum\limits_{p\in\p,p<N}a(p)-\frac{1}{N}\sum\limits_{n=0}^{N-1}\Lambda(n)a(n)\bigg\vert<\varepsilon.$$
where $\lambda\colon\mathbb{Z}\to\mathbb{R}$ is the von Mangoldt function defined by
$$\Lambda(n)=
\begin{cases}
\log p & \text{if $n=p^m$ for some $m\in\mathbb{N}$ and $p\in\mathbb{N}$,}\\
0& \text{otherwise.}
\end{cases}$$
\end{lem}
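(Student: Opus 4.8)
The plan is to reduce the two averages to a comparison between two systems of weights attached to the primes below $N$, and then to invoke the Prime Number Theorem; the hypothesis $|a(n)|\le 1$ is precisely what makes every estimate uniform in $a$, and after the first reduction the function $a$ plays no further role.

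First I would split the von Mangoldt weighted sum according to whether $n$ is a prime or a proper prime power, writing
\[
\frac1N\sum_{n=0}^{N-1}\Lambda(n)a(n)=\frac1N\sum_{p<N}(\log p)\,a(p)+\frac1N\sum_{\substack{m\ge 2,\ p^m<N}}(\log p)\,a(p^m).
\]
Subtracting this from $\frac{1}{\pi(N)}\sum_{p<N}a(p)$ and using $|a|\le 1$ term by term, the quantity to be estimated is bounded, uniformly over all admissible $a$, by
\[
\sum_{p<N}\Bigl|\frac{1}{\pi(N)}-\frac{\log p}{N}\Bigr|+\frac1N\sum_{\substack{m\ge 2,\ p^m<N}}\log p .
\]
The second sum is the standard contribution of proper prime powers $\psi(N)-\theta(N)$, which by Chebyshev's estimate is $O(\sqrt N\,\log N)$, so after division by $N$ it is $O(\log N/\sqrt N)\to 0$ and is harmless.

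The heart of the matter is the first sum, which I would rewrite as $\frac1N\sum_{p<N}\bigl|\frac{N}{\pi(N)}-\log p\bigr|$ and control by exploiting that, by the Prime Number Theorem, $\frac{N}{\pi(N)}=(1+o(1))\log N$ and $\frac{\pi(N)\log N}{N}\to 1$. Fixing a small parameter $\delta\in(0,1)$, I would split the primes at $N^{1-\delta}$. For $p\ge N^{1-\delta}$ one has $\log p\in[(1-\delta)\log N,\log N]$, so $\bigl|\frac{N}{\pi(N)}-\log p\bigr|\le(\delta+o(1))\log N$; summing over at most $\pi(N)$ such primes and dividing by $N$ yields at most $\frac{\pi(N)\log N}{N}(\delta+o(1))$, which tends to $\delta$. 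For $p<N^{1-\delta}$ there are only $\pi(N^{1-\delta})=O\bigl(N^{1-\delta}/\log N\bigr)$ primes, and each term is at most $\max\bigl(\frac{N}{\pi(N)},\log N\bigr)=(1+o(1))\log N$, so this part is $O(N^{-\delta})\to 0$. Hence $\limsup_{N\to\infty}\sum_{p<N}\bigl|\frac{1}{\pi(N)}-\frac{\log p}{N}\bigr|\le\delta$.

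Finally, given $\varepsilon>0$ I would choose $\delta<\varepsilon/2$ and then pick $N_0$ so large that for every $N>N_0$ both the prime-power contribution and the excess of the first sum over $\delta$ are below $\varepsilon/2$; this yields the claimed inequality for all $a$ with $|a|\le1$ simultaneously. I expect the first sum to be the main obstacle: Chebyshev-type bounds alone do not suffice, because matching the weight $\log p$ for primes $p$ near $N$ against $N/\pi(N)$ requires the sharp asymptotic $N/\pi(N)\sim\log N$ with constant exactly $1$, that is, the full strength of the Prime Number Theorem. The uniformity in $a$, by contrast, is automatic once the problem has been recast as the $L^1$-type discrepancy between the two weight systems.
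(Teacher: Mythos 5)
Your proof is correct: the reduction to the $L^1$-discrepancy between the two weight systems $1/\pi(N)$ and $\Lambda(n)/N$, the removal of the proper prime powers via $\psi(N)-\theta(N)=O(\sqrt{N}\log N)$, and the splitting of the primes at $N^{1-\delta}$ combined with $N/\pi(N)=(1+o(1))\log N$ all check out, and the uniformity in $a$ is indeed automatic once the bound no longer involves $a$. The paper gives no proof of this lemma --- it is imported verbatim from the proof of Lemma 1 in \cite{Frantzikinakis-Host-Kra} --- and your argument is essentially the standard comparison carried out there, resting on the full strength of the Prime Number Theorem exactly as you identify.
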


Then for any $s\in\N$ and map $a:\N^s\to\R$ with $\vert a(n)\vert\leq 1$, one has
\begin{scriptsize}
\begin{align*}
&\bigg\vert\frac{1}{(\pi(N))^s}\sum\limits_{\mbox{\tiny$\begin{array}{c}
p_1,\dotsc p_s\in\p\nonumber\\
p_1,\dotsc,p_s<N\end{array}$}}a(p_1,\dotsc,p_s)-\frac{1}{N^s}
\sum\limits_{n_1,\dotsc,n_s=0}^{N-1}\Lambda(n_1)\dotsc\Lambda(n_s)a(n_1,\dotsc,n_s)\bigg\vert\\
&\leq\sum_{\ell=1}^{s}\left(\frac{1}{N^{\ell-1}}\sum\limits_{n_1,\dotsc,n_{\ell-1}=0}^{N-1}\Lambda(n_1)\dotsc\Lambda(n_{\ell-1})
\frac{1}{\pi(N)^{s-\ell-1}}\sum\limits_{\mbox{\tiny$\begin{array}{c}
p_{\ell+1},\dotsc p_s\in\p\nonumber\\
p_{\ell+1},\dotsc,p_s<N\end{array}$}}
\bigg\vert A_{n_1,\dotsc,n_{\ell-1},p_{\ell+1},\dotsc,p_s}(N)\bigg\vert\right)
\end{align*}
\end{scriptsize}
where
\begin{align*}
A_{n_1\dotsc n_{\ell-1},p_{\ell+1}\dotsc p_s}(N)=\bigg\vert\frac{1}{\pi(N)}\sum\limits_{p_\ell\in\p,p_{\ell}\leq N}
a(n_1\dotsc n_{\ell-1},p_{\ell}\dotsc p_s)-\\
\frac{1}{N}\sum\limits_{n_{\ell}=0}^{N-1}\lambda(n_{\ell}) a(n_1\dotsc n_{\ell},p_{\ell+1}\dotsc p_s)\bigg\vert.
\end{align*}
By Lemma \ref{Host-Kra}, for large enough $N\in\N$ and $d_1,\dotsc,d_{\ell-1}$, $p_{\ell},\dotsc,p_s\in\N$, we have $\vert A_{n_1,\dotsc,n_{\ell-1},p_{\ell+1},\dotsc,p_s}(N)\vert<\frac{\varepsilon}{2s}$ for every $n_1,\dotsc,n_{\ell-1},p_{\ell+1},\dotsc,p_s$ and $\ell\in\N$. Then by the well known fact that $\Lambda$ has mean one, for large enough $N$ one has $\frac{1}{N^{\ell-1}}\sum\limits_{n_1,\dotsc,n_{\ell-1}=0}^{N-1}\Lambda(n_1)\dotsc\Lambda(n_{\ell-1})\leq 2$ for every $\ell=1,2,\dotsc,s$. Concluding the above discussion, we have the following lemma.

\begin{lem}\label{P-host-kra} For any $s\in\N$ and $\varepsilon>0$, there exists $N_0\in\N$ such that for large enough $N$ and map $a:\N^s\to[-1,1]$ one has
$$\bigg\vert\frac{1}{(\pi(N))^s}\sum\limits_{\mbox{\tiny$\begin{array}{c}
p_1,\dotsc p_s\in\p\nonumber\\
p_1,\dotsc,p_s<N\end{array}$}}a(p_1,\dotsc,p_s)-\frac{1}{N^s}
\sum\limits_{n_1,\dotsc,n_s}^{N-1}\Lambda(n_1)\dotsc\Lambda(n_s)a(n_1,\dotsc,n_s)\bigg\vert<\varepsilon.$$
\end{lem}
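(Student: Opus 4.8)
The plan is to interpolate between the $s$-fold prime average and the $s$-fold $\Lambda$-weighted integer average by switching one coordinate from a prime sum to a von Mangoldt sum at a time, the single-variable step being governed by Lemma \ref{Host-Kra}. For $0\le \ell\le s$ I introduce the hybrid quantity
$$S_\ell(N)=\frac{1}{N^{\ell}\,\pi(N)^{s-\ell}}\sum \Lambda(n_1)\cdots\Lambda(n_\ell)\,a(n_1,\dots,n_\ell,p_{\ell+1},\dots,p_s),$$
where $n_1,\dots,n_\ell$ run over $\{0,\dots,N-1\}$ and $p_{\ell+1},\dots,p_s$ run over the primes below $N$. Then $S_0(N)$ is the prime average and $S_s(N)$ is the $\Lambda$-weighted integer average appearing in the statement, so by the triangle inequality it is enough to show $|S_{\ell-1}(N)-S_\ell(N)|<\varepsilon/s$ for each $\ell\in\{1,\dots,s\}$ and all large $N$.

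Fixing $\ell$ and factoring out the summations over all coordinates except the $\ell$-th, the difference $S_{\ell-1}(N)-S_\ell(N)$ becomes a $\Lambda$-weighted sum over $n_1,\dots,n_{\ell-1}$ of a normalized prime average over $p_{\ell+1},\dots,p_s$ of the one-variable difference $A_{n_1\dots n_{\ell-1},p_{\ell+1}\dots p_s}(N)$ displayed just before the statement. The crucial observation is that for every choice of the frozen coordinates the one-variable map $m\mapsto a(n_1,\dots,n_{\ell-1},m,p_{\ell+1},\dots,p_s)$ again takes values in $[-1,1]$; hence Lemma \ref{Host-Kra} applies with a threshold $N_0$ depending only on $\varepsilon$ and $s$ (and not on the frozen data), giving $A_{n_1\dots n_{\ell-1},p_{\ell+1}\dots p_s}(N)<\varepsilon/(2s)$ uniformly for all $N\ge N_0$.

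It then remains to estimate the two outer sums. The prime average $\frac{1}{\pi(N)^{s-\ell}}\sum_{p_{\ell+1},\dots,p_s}$ is a genuine (normalized) average, so it inherits the bound $\varepsilon/(2s)$ from the inner term. The outer $\Lambda$-weighted sum equals $\big(\tfrac1N\sum_{n=0}^{N-1}\Lambda(n)\big)^{\ell-1}$, which tends to $1$ because $\Lambda$ has mean one and is therefore at most $2$ once $N$ is large; since $\Lambda\ge 0$, these nonnegative weights merely scale the inner bound. Multiplying yields $|S_{\ell-1}(N)-S_\ell(N)|\le 2\cdot\varepsilon/(2s)=\varepsilon/s$, and summing over $\ell=1,\dots,s$ gives $|S_0(N)-S_s(N)|<\varepsilon$, as required. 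I expect the only genuine subtlety to lie in the uniformity used in the middle step: one must invoke Lemma \ref{Host-Kra} simultaneously for the whole family of one-variable functions produced by freezing the other coordinates, and the argument works precisely because that lemma is stated uniformly over all sequences bounded by $1$. The mean-one property of $\Lambda$, though standard, is what prevents the accumulated weights from growing as successive coordinates are switched, and so deserves explicit mention.
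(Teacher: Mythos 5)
Your proposal is correct and follows essentially the same route as the paper: the telescoping through hybrid averages $S_\ell(N)$, the uniform application of Lemma \ref{Host-Kra} to the one-variable slices bounded by $1$, and the bound $\le 2$ on the outer $\Lambda$-weighted averages via the mean-one property are exactly the steps in the paper's derivation preceding the lemma. No substantive difference to report.
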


In the proof of the \cite[Theorem 3]{Frantzikinakis-Host-Kra}, the authors proved the following lemma.
\begin{lem}\label{P-host-kra2}For any $\varepsilon>0$ and measure preserving system $(X,\mathscr{B}_X,T,\mu)$, there exists $W_0$ and $N_0\in\N$ such that for any $N>N_0$, and $a\colon X\times\N\to\R\in C(X,\N)$ with $\vert a(x,n)\vert\leq 1$, we have
\begin{scriptsize}
\begin{align*}&\bigg\Vert \frac{1}{[W_0N/3]}\sum\limits_{0\leq n<[W_0N/3]}\big(\Lambda(n)a(x,n)\big)-\frac{1}{\phi(W_0)}\sum\limits_{\mbox{\tiny$\begin{array}{c}
0\leq r<W_0,\nonumber\\
(r,W_0)=1\end{array}$}}\frac{1}{[N/3]}\sum\limits_{0\leq n<[N/3]}a(x,W_0n+r)\bigg\Vert_{L^2(\mu)}<\varepsilon,
\end{align*}
\end{scriptsize}
where $\phi$ is the Euler function.
\end{lem}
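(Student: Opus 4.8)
The plan is to carry out the \emph{$W$-trick} underlying \cite{Frantzikinakis-Host-Kra}, reducing the von Mangoldt weighted average over $[0,[W_0N/3])$ to an equally weighted average over the coprime arithmetic progressions $\{W_0m+r:(r,W_0)=1\}$. Write $M=[N/3]$, so that $[W_0N/3]=W_0M+O(W_0)$, and partition $0\le n<[W_0N/3]$ according to the residue $r=n\bmod W_0$, setting $n=W_0m+r$. Because $\|a\|_\infty\le 1$, $\Lambda\ge 0$, and $\frac{1}{X}\sum_{n<X}\Lambda(n)\to 1$, the mismatch between the exact summation range and the range $m<M$ contributes $O(W_0/N)$ uniformly in $x$, so it is harmless. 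Thus, taking $L^2(\mu)$ norms only at the very end, it suffices to compare, uniformly in $x$, the average $\frac{1}{W_0M}\sum_{r}\sum_{m<M}\Lambda(W_0m+r)\,a(x,W_0m+r)$ with $\frac{1}{\phi(W_0)M}\sum_{(r,W_0)=1}\sum_{m<M}a(x,W_0m+r)$.

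First I would discard the residues with $\gcd(r,W_0)>1$. If $n\equiv r\pmod{W_0}$ and $\Lambda(n)\ne 0$, then $n$ is a prime power $p^k$ with $p\mid W_0$, so $p$ lies among the finitely many prime divisors of $W_0$; the total $\Lambda$-mass of such $n$ below $W_0M$ is $\sum_{p\mid W_0}\sum_{p^k\le W_0M}\log p=O_{W_0}(\log(W_0M))=o(W_0M)$ for fixed $W_0$, so, since $|a|\le 1$, this block contributes $o(1)$ to the first average uniformly in $x$. Hence only the coprime classes matter, and the remaining task is to show that the normalized weight $\frac{\phi(W_0)}{W_0}\Lambda(W_0m+r)$ may be replaced by the constant $1$; equivalently that
$$\frac{1}{W_0M}\sum_{(r,W_0)=1}\sum_{m<M}\Bigl(\Lambda(W_0m+r)-\tfrac{W_0}{\phi(W_0)}\Bigr)a(x,W_0m+r)$$
tends to $0$ in $L^2(\mu)$ with a rate depending only on $W_0$.

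The \emph{mean} of the bracketed weight over each coprime class tends to $0$ by the prime number theorem in arithmetic progressions (Siegel--Walfisz), since $\sum_{n<W_0M,\,n\equiv r}\Lambda(n)\sim W_0M/\phi(W_0)$; this disposes of the part of $a$ that is constant along the progression. \textbf{The main obstacle} is the fluctuating part: $\Lambda-W_0/\phi(W_0)$ is \emph{not} small, so no term-by-term bound is available, and one genuinely cannot allow $a$ to be arbitrary (for $a(x,n)=\mathbf{1}[n\text{ prime}]$ the two sides have different limits). The argument must use that $a(x,\cdot)$ is generated by the dynamics, i.e.\ is an orbit average of the form $n\mapsto\prod_i f_i(T^{c_in}x)$, for which the characteristic factor is a nilsystem. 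Following \cite{Frantzikinakis-Host-Kra}, after the $W$-trick the corrected von Mangoldt weight $\frac{\phi(W_0)}{W_0}\Lambda(W_0m+r)$ splits as the constant $1$ plus an error of arbitrarily small Gowers uniformity norm once $W_0$ is large, and uniform sequences are asymptotically orthogonal to nilsequences. I would therefore invoke that uniformity estimate directly, or reproduce its van der Corput reduction, then choose $W_0$ large enough that the Siegel--Walfisz and uniformity errors fall below $\varepsilon/2$, and finally $N_0$ large enough to absorb the range and small-prime discrepancies, obtaining the stated bound after integrating in $x$.
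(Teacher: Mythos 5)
The paper does not actually prove this lemma: it is quoted as a byproduct of the proof of Theorem 3 in \cite{Frantzikinakis-Host-Kra}, so there is no internal argument to compare yours against line by line. What can be assessed is your reading of the statement and your proposed route, and on the central point you are right: the lemma \emph{as stated} --- for an arbitrary $a$ with $\vert a(x,n)\vert\le 1$ --- is false, and your counterexample $a(x,n)=\mathbf{1}[n\ \text{prime}]$ works. Indeed $\frac{1}{M}\sum_{n<M}\Lambda(n)\mathbf{1}[n\ \text{prime}]=\frac{1}{M}\sum_{p<M}\log p\to 1$ by the prime number theorem, while each inner average $\frac{1}{[N/3]}\sum_{n<[N/3]}\mathbf{1}[W_0n+r\ \text{prime}]$ is $O\bigl(W_0/(\phi(W_0)\log N)\bigr)\to 0$, so the difference tends to $1$, not to something below $\varepsilon$. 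The statement only becomes true when $a(x,\cdot)$ is restricted to the dynamically generated sequences to which it is actually applied in Theorem \ref{P-L^2}, namely $a_x(n)=f(T^{Q(n)}x)$ for a polynomial $Q$ (or, more generally, to sequences whose averages are controlled by Gowers uniformity norms of the weight); that restriction is exactly what makes the Green--Tao input usable and should be written into the hypothesis.

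For the corrected statement, your outline is the right one and is the one in \cite{Frantzikinakis-Host-Kra}: discard the residues with $(r,W_0)>1$ (negligible $\Lambda$-mass), handle the mean of $\Lambda(W_0m+r)-W_0/\phi(W_0)$ over each coprime class by the prime number theorem in arithmetic progressions, and control the fluctuating part by the theorem that $\frac{\phi(W)}{W}\Lambda(Wm+r)-1$ has small Gowers norm for $W$ large, combined with a generalized von Neumann (van der Corput) estimate showing that the weighted polynomial ergodic averages are controlled by those norms. As a proof, however, what you have written is a sketch: the two decisive inputs --- the uniformity estimate for the modified von Mangoldt function and the Gowers-norm control of the weighted averages --- are invoked rather than carried out, and you do not address the uniformity of the resulting bounds in the frozen parameters $r_1,\dotsc,r_{\ell-1},n_1,\dotsc$ that is needed when the lemma is applied one variable at a time in the proof of Theorem \ref{P-L^2}. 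Since these are precisely the contents of the cited proof, deferring to them is legitimate; but then your real contribution here is the (valuable, and correct) observation that the lemma's hypothesis must be repaired, not an independent proof of the lemma.
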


\begin{thm}\label{P-L^2} Let $s\in\N$, $P:\N^s\to\N$ be a non-constant integer polynomial, then for any measure preserving system $(X,\mathscr{B}_X,\mu,T)$ and $f\in L^{\infty}(X)$, the average
$$\frac{1}{\pi(N)^s}\sum\limits_{\mbox{\tiny$\begin{array}{c}
0\leq p_1\dotsc p_s< N,\\
p_1\dotsc p_s\in\p\end{array}$}}T^{P(p_1,p_2,\dotsc,p_s)}f$$
convergence in $L^2(\mu)$ as $N\to\infty$.
\end{thm}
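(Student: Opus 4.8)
The plan is to transfer the prime average, in two controlled steps, to ordinary box averages of $T^{Q}f$ along polynomial orbits running over full arithmetic progressions, and then to invoke Leibman's convergence theorem (Theorem \ref{Leibman}) together with the finiteness of the relevant set of residues. Fix $f\in L^{\infty}(\mu)$; after normalizing we may assume $\|f\|_{\infty}\le 1$. For $x\in X$ put $a(x,n_1,\dots,n_s)=f(T^{P(n_1,\dots,n_s)}x)$, which is bounded by $1$. Because the estimate in Lemma \ref{P-host-kra} is uniform over all maps $a:\N^s\to[-1,1]$, applying it pointwise in $x$ and integrating shows that for every $\varepsilon>0$ and all large $N$,
$$\Big\|\frac{1}{\pi(N)^s}\sum_{\substack{0\le p_i<N\\ p_i\in\p}}T^{P(p_1,\dots,p_s)}f-\frac{1}{N^s}\sum_{0\le n_i<N}\Lambda(n_1)\cdots\Lambda(n_s)\,T^{P(n_1,\dots,n_s)}f\Big\|_{L^2(\mu)}<\varepsilon.$$
Hence it suffices to prove $L^2$-convergence of the von Mangoldt–weighted integer averages on the right.

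Second, I would strip off the weights $\Lambda$ by the $W$-trick encoded in Lemma \ref{P-host-kra2}. Applying that one-variable lemma coordinate by coordinate — exactly as the passage from Lemma \ref{Host-Kra} to Lemma \ref{P-host-kra} was carried out, using that the partial averages $\frac1{M^{\ell}}\sum_{0\le n_i<M}\Lambda(n_1)\cdots\Lambda(n_\ell)\le 2$ for large $M$ to keep the accumulated error under control — yields a modulus $W_0=W_0(\varepsilon)$ so that, after aligning the two ranges (the dilation by $W_0$ and the factor $3$ of Lemma \ref{P-host-kra2}) so that the weighted sum runs over $[0,N)^s$ and the reduced boxes have side $M\asymp N/W_0\to\infty$,
$$\Big\|\frac1{N^s}\sum_{0\le n_i<N}\Lambda(n_1)\cdots\Lambda(n_s)\,T^{P(\mathbf n)}f-\frac{1}{\phi(W_0)^s}\sum_{\substack{0\le r_i<W_0\\ (r_i,W_0)=1}}B_{\mathbf r}(M)\Big\|_{L^2(\mu)}<\varepsilon$$
for all large $N$, where $Q_{\mathbf r}(\mathbf n)=P(W_0 n_1+r_1,\dots,W_0 n_s+r_s)$ is again an integer polynomial $\Z^s\to\Z$ and $B_{\mathbf r}(M)=\frac1{M^s}\sum_{0\le n_i<M}T^{Q_{\mathbf r}(\mathbf n)}f$. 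Thus, up to total $L^2$-error $2\varepsilon$, the prime average is a fixed convex combination, over the finitely many residue vectors $\mathbf r=(r_1,\dots,r_s)$ coprime to $W_0$, of the box averages $B_{\mathbf r}(M)$.

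For each fixed $\mathbf r$, Theorem \ref{Leibman} applied to the polynomial $Q_{\mathbf r}$ and the F\o lner sequence $\{[0,M)^s\}_{M}$ of $\Z^s$ shows that $B_{\mathbf r}(M)$ converges in $L^2(\mu)$ as $M\to\infty$, hence is Cauchy. Since there are only $\phi(W_0)^s$ such vectors, their convex combination is Cauchy as well, uniformly once $M$ exceeds some threshold $M_0$. Combining this with the two $\varepsilon$-approximations above, I conclude that for all sufficiently large $N,N'$ the prime averages differ by at most $O(\varepsilon)$ in $L^2(\mu)$. As $\varepsilon>0$ is arbitrary, the prime averages form a Cauchy sequence in $L^2(\mu)$ and therefore converge.

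The main obstacle is not a new idea but the bookkeeping in the second step: the modulus $W_0$ depends on $\varepsilon$, so there is no single limiting expression valid uniformly in $\varepsilon$, and one is forced to establish convergence through the Cauchy criterion with $W_0$ held fixed within each $\varepsilon$-scale. Two points demand care here — aligning the summation ranges of Lemma \ref{P-host-kra} and Lemma \ref{P-host-kra2} (the factor $3$ and the dilation by $W_0$, so that the box side still tends to infinity), and justifying the iterated, coordinate-by-coordinate application of the one-variable Lemma \ref{P-host-kra2} to the $s$-fold weighted sum, where the uniform bound on the partial $\Lambda$-averages is exactly what prevents the error from growing with the number of variables $s$.
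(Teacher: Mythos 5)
Your proposal is correct and follows essentially the same route as the paper: reduce to von Mangoldt--weighted averages via Lemma \ref{P-host-kra}, pass to $W$-tricked progressions coordinate by coordinate via Lemma \ref{P-host-kra2} with the mean-one bound on partial $\Lambda$-averages controlling the accumulated error, apply Theorem \ref{Leibman} to the polynomials $P(W_0n_1+r_1,\dotsc,W_0n_s+r_s)$, and conclude by the Cauchy criterion with $W_0$ fixed at each $\varepsilon$-scale. The two bookkeeping points you flag (range alignment and the iterated one-variable application) are exactly the ones the paper handles.
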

\begin{proof}Without loss of generality, we can assume $\Vert f\Vert_{\infty}\leq 1$. For any $x\in X$, we set $a_x:\N^s\to\R$ as $a_x(n_1,\dotsc,n_s)=f(T^{P(n_1,\dotsc,n_s)}x)$. Then by Lemma \ref{P-host-kra}, it is suffice to show the average
$$\frac{1}{N^s}\sum\limits_{0\leq n_1\dotsc n_s<N}\bigg(\Lambda(n_1)\dotsc\Lambda(n_s)T^{P(n_1,n_2,\dotsc,n_s)}f\bigg)$$
convergence in $L^2(\mu)$ as $N\to\infty$.
\begin{footnotesize}
\begin{align}\label{P-e3}
&\bigg\Vert \frac{1}{[WN/3]^s}\sum\limits_{n_1,\dotsc,n_s=0}^{[WN/3]-1}\bigg(\Lambda(n_1)\dotsc\Lambda(n_s)a_x(n_1\dotsc n_s)\bigg)\nonumber\\
 &\ \ \ \ \ \ \ \  \ \ \ \   -\frac{1}{\phi(W)^s}\sum\limits_{\mbox{\tiny$\begin{array}{c}
0\leq r_1,\dotsc,r_s<W,\nonumber\\
(r_i,W)=1\end{array}$}}\frac{1}{[N/3]^s}\sum\limits_{n_1,\dotsc,n_s=0}^{[N/3]-1}a_x(Wn_1+r_1,\dotsc,Wn_s+r_s)\bigg\Vert_{L^2(\mu)}\nonumber\\
&\leq \sum\limits_{\ell=1}^{s}\frac{1}{\phi(W)^{\ell-1}}\sum\limits_{\mbox{\tiny$\begin{array}{c}
0\leq r_1,\dotsc,r_{\ell-1}<W,\\
(r_i,W)=1\end{array}$}}\frac{1}{[N/3]^{\ell-1}}\sum\limits_{n_1,\dotsc,n_{\ell-1}=0}^{[N/3]-1}
B_{r_1\dotsc r_{\ell-1}}^{n_1\dotsc n_{\ell-1}}([WN/3])
\end{align}
\end{footnotesize}
where $\phi$ the Euler function,
\begin{footnotesize}
$$B_{r_1\dotsc r_{\ell-1}}^{n_1\dotsc n_{\ell-1}}([WN/3])=\frac{1}{[WN/3]^{s-\ell}}\sum\limits_{n_{\ell+1\dotsc n_s}=0}^{[WN/3]-1}\Lambda(n_{\ell+1})\dotsc\Lambda(n_s)D_{r_1\dotsc r_{\ell-1}}^{n_1\dotsc n_{\ell-1},n_{\ell+1},\dotsc,n_s}([WN/3]),$$
\end{footnotesize}
and
\begin{scriptsize}
\begin{align*}
D_{r_1,\dotsc,r_{\ell-1}}^{n_1,\dotsc,n_{\ell-1},n_{\ell+1},\dotsc,n_s}([WN/3])=\bigg\Vert\frac{1}{[WN/3]}\sum\limits_{n_\ell=0}^{[WN/3]-1}
\Lambda(n_\ell)a_x(Wn_1+r_1,\dotsc Wn_{\ell-1}+r_{\ell-1},n_\ell,\dotsc n_s)\\
-\frac{1}{\phi(W)}\sum\limits_{\mbox{\tiny$\begin{array}{c}
0\leq r_{\ell}<W,\nonumber\\
(r_\ell,W)=1\end{array}$}}\frac{1}{[N/3]}\sum\limits_{n_\ell=0}^{[N/3]-1}a_x(Wn_1+r_1,\dotsc,Wn_\ell+r_{\ell},n_{\ell+1},\dotsc,n_s)\bigg\Vert_{L^2(\mu)}.
\end{align*}
\end{scriptsize}

For any $\varepsilon>0$, by the Lemma \ref{P-host-kra2}, there exists $W_0\in\N$ and such that for $N\in\N$ large enough, one has
$$D_{r_1,\dotsc,r_{\ell-1}}^{n_1,\dotsc,n_{\ell-1},n_{\ell+1},\dotsc,n_s}([W_0N/3])<\frac{\varepsilon}{2s},$$
for every $r_1,\dotsc,r_{\ell-1}$ and $n_1,\dotsc,n_{\ell-1},n_{\ell+1},\dotsc,n_s$ and $\ell=0,1,\dotsc,s$. By the fact that $\Lambda$ has mean one, for large enough $N\in\N$ we have
$$\frac{1}{[W_0N/3]^{s-\ell}}\sum\limits_{n_{\ell+1}\dotsc n_s=0}^{[W_0N/3]-1}\Lambda(n_{\ell+1})\dotsc\Lambda(n_s)<2$$
for every $1\leq\ell\leq s$. Thus there exits $W_0\in\N$ and $N_0\in\N$, such that for any $N>N_0$ one has $(\ref{P-e3})<\varepsilon$.

By Theorem \ref{Leibman},
$$\frac{1}{\phi(W_0)^s}\sum\limits_{\mbox{\tiny$\begin{array}{c}
1\leq r_1,\dotsc,r_s<W_0,\nonumber\\
(r_i,W_0)=1\end{array}$}}\frac{1}{[N/3]^s}\sum\limits_{n_1\dotsc n_s=0}^{[N/3]-1}a_x(W_0n_1+r_1,\dotsc,W_0n_s+r_s)$$
converges in $L^2(\mu)$ as $N\to\infty$. We set
$$E([W_0N/3])(x)=\frac{1}{[W_0N/3]^s}\sum\limits_{n_1\dotsc,n_s=0}^{[W_0N/3]-1}\Lambda(n_1)\dotsc\Lambda(n_s)a_x(n_1\dotsc n_s)).$$
Using triangle inequality we have that for large enough $M$, $N$
$$\bigg\Vert E([W_0N/3])(x)-E([W_0M/3])(x)\bigg\Vert<\varepsilon.$$
Since $E([W_0N/3]+i)(x)-E([W_0N/3])(x)\to0$ as $N\to\infty$, for $0\leq i<W_0/3$, we conclude that $E(N)(x)$ is Cauchy. And this finishes our proof.
\end{proof}

Similar with the proof of Proposition \ref{disintegration}, we have the following proposition.
\begin{prop}\label{P-disintegration}Let $s\in\N$ and $P:\N^s\to\N$, then for any measure preserving system $(X,\mathscr{B}_X,\mu,T)$, there exists a disintegration of $\mu$, $\mu=\int\tau_xd\mu(x)$, in the sense that there exist $\{N_i\}_{i=1}^{\infty}$ and a Borel subset $X_0\subseteq X_1$ with $\mu(X_0)=1$, such that
$$\lim\limits_{i\to\infty}\frac{1}{\pi(N_i)^s}\sum\limits_{\mbox{\tiny$\begin{array}{c}
1\leq p_1\dotsc,p_s\leq N_i,\\
p_i\in\p\end{array}$}}f(T^{P(p_1,p_2,\dotsc,p_s)}x)=\int fd\tau_x$$
holds for every $x\in X_0$ and $f\in C(X)$, and
$$\int\int fd\tau_xd\mu(x)=\int fd\mu.$$
\end{prop}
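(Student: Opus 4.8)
The plan is to follow the proof of Proposition \ref{disintegration} essentially verbatim, with the one change that the $L^2$-convergence input is now Theorem \ref{P-L^2}, the prime-polynomial analogue of Theorem \ref{Leibman}, rather than Theorem \ref{Leibman} itself. Fix a countable dense subset $\{g_n\}_{n=1}^\infty$ of $C(X)$. For each fixed $n$, Theorem \ref{P-L^2} guarantees that the averages $\frac{1}{\pi(N)^s}\sum_{1\le p_1,\dots,p_s\le N,\ p_i\in\p} g_n(T^{P(p_1,\dots,p_s)}x)$ converge in $L^2(\mu)$ as $N\to\infty$, and hence converge $\mu$-a.e.\ along a suitable subsequence. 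Extracting subsequences successively for $g_1,g_2,\dots$ and passing to the diagonal, I obtain a single sequence $\{N_i\}_{i=1}^\infty$ and a Borel set $X_\infty$ with $\mu(X_\infty)=1$ on which the averages along $\{N_i\}$ converge for every $g_n$ simultaneously. Setting $X_0=X_\infty\cap\supp(\mu)$ still gives $\mu(X_0)=1$.

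Next I would upgrade convergence from the dense set $\{g_n\}$ to all $f\in C(X)$. The clean point here is that, for each fixed $N$ and $x$, the quantity $\frac{1}{\pi(N)^s}\sum f(T^{P(p_1,\dots,p_s)}x)$ is a genuine convex average of values of $f$: there are exactly $\pi(N)^s$ summands, so the weights sum to $1$. Consequently, if $\|g_n-f\|_\infty\le\varepsilon$ then the averages of $g_n$ and of $f$ differ by at most $\varepsilon$, uniformly in both $N$ and $x$. Choosing $g_n$ close to $f$ and combining with the fact that the $g_n$-averages are Cauchy along $\{N_i\}$, a routine triangle-inequality argument shows the $f$-averages are Cauchy along $\{N_i\}$ for every $x\in X_0$, hence convergent. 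This is where the argument is actually \emph{simpler} than in Proposition \ref{disintegration}: because the averaging is a finite uniform sum rather than a F{\o}lner average, no auxiliary continuity/support estimate is needed.

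For each $x\in X_0$, I then define $L_x:C(X)\to\R$ by $L_x(f)=\lim_{i\to\infty}\frac{1}{\pi(N_i)^s}\sum f(T^{P(p_1,\dots,p_s)}x)$. As a pointwise limit of positive unital functionals of norm $1$, $L_x$ is itself a positive linear functional with $L_x(1)=1$, so by the Riesz Representation Theorem there exists $\tau_x\in\mathcal{M}(X)$ with $L_x(f)=\int f\,d\tau_x$ for all $f\in C(X)$. For the averaging identity, $T$-invariance of $\mu$ gives $\int T^{P(p_1,\dots,p_s)}f\,d\mu=\int f\,d\mu$ for every tuple, so each average integrates to exactly $\int f\,d\mu$; the bounded convergence theorem (the averages are bounded by $\|f\|_\infty$) then yields $\int\!\int f\,d\tau_x\,d\mu(x)=\int f\,d\mu$.

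The only ingredient requiring genuine work beyond Proposition \ref{disintegration} is the input Theorem \ref{P-L^2}; granting it, the sole subtlety is the passage from $L^2$-convergence to $\mu$-a.e.\ convergence along a common subsequence via the diagonal argument, which is standard. I therefore do not anticipate any real obstacle in this proposition itself.
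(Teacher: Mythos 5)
Your proposal is correct and follows essentially the same route as the paper, which itself only states that the proposition is proved ``similar with the proof of Proposition \ref{disintegration}'': diagonal extraction of an a.e.-convergent subsequence from the $L^2$-convergence of Theorem \ref{P-L^2} applied to a countable dense subset of $C(X)$, upgrade to all of $C(X)$ by a triangle inequality, then Riesz representation and bounded convergence. Your observation that the density-upgrade step is immediate because the prime averages are convex combinations (so sup-norm closeness of $f$ and $g_n$ transfers uniformly in $N$ and $x$) is a harmless simplification of the support argument used in Proposition \ref{disintegration}, and everything else matches.
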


Note that, for any $s\in\N$, non-constant polynomial $P:\N^s\to\Z$ with degree $m$ and $t_0<t_1\in\Z$, let
$$E(N)=\{(p_1,p_2,\dotsc,p_s)\in\p^s\cap[1,N]^s:\ \ t_0<P(p_1,p_2,\dotsc,p_s)<t_1\}.$$
We have
$$\vert E(N)\vert\leq\pi(N)^{(s-1)}(t_1-t_0)m.$$
Thus $\frac{\vert E(N)\vert}{\pi(N)^s}\to 0$ as $N\to\infty$. Then following the proof of Theorem \ref{characteristic} and \ref{tauup-mu}, one has the following results.

\begin{thm}\label{P-characteristic}Let $s\in\N$, $P\colon\N^s\to\N$, $(X,\mathscr{B}_X,\mu,T)$ be a measure preserving system, and $P_{\mu}(T)$ be its Pinsker $\sigma$-algebra. Then for any $f\in L^{\infty}(\mu)$
$$\frac{1}{\pi(N)^s}\sum\limits_{\mbox{\tiny$\begin{array}{c}
0\leq p_1,\dotsc,p_s<N,\\
p_i\in\p\end{array}$}}\bigg(T^{P(p_1,\dotsc,p_s)}f-T^{P(p_1,\dotsc,p_s)}E(f|P_{\mu}(T)\bigg)\to0$$
in $L^2(\mu)$ as $N\to\infty$.
\end{thm}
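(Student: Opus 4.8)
The plan is to transcribe the proof of Theorem \ref{characteristic} into the prime setting, replacing the Føl­ner average by the prime average $\frac{1}{\pi(N)^s}\sum$ and replacing the Banach-density thinness input (Lemma \ref{density}) by the elementary level-set count displayed just above the statement. First I would invoke the Rokhlin--Sinai Theorem \ref{Rokhlin-Sinai} to fix a sub-$\sigma$-algebra $\mathscr{P}$ with $T^{-1}\mathscr{P}\subseteq\mathscr{P}$, $\bigvee_{k\geq0}T^k\mathscr{P}=\mathscr{B}_X$ and $\bigcap_{k\geq0}T^{-k}\mathscr{P}=P_\mu(T)$. As in Theorem \ref{characteristic}, it suffices to prove the claim for $\mathscr{P}$-measurable $f$ with $\Vert f\Vert_\infty\leq1$, since the general case then follows by $T$-invariance of $\mu$ (to pass to $T^r\mathscr{P}$-measurable functions) and by density of $\bigcup_r L^\infty(X,T^r\mathscr{P},\mu)$ in $L^\infty(\mu)$. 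Writing $f^\infty=E(f\mid P_\mu(T))$ and $f^m=E(f\mid T^{-m}\mathscr{P})$, I would use the martingale convergence Theorem \ref{function limits} to choose, for a given $\varepsilon>0$, an $m$ with $\Vert f^m-f^\infty\Vert_{L^2(\mu)}<\varepsilon$; the triangle inequality then reduces the problem to bounding $\bigl\Vert\frac{1}{\pi(N)^s}\sum_{\bar p}T^{P(\bar p)}(f-f^m)\bigr\Vert_{L^2(\mu)}$, where $\bar p=(p_1,\dots,p_s)$ ranges over prime tuples in $[0,N)^s$.

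Next I would square this norm and expand it as a double sum over pairs of prime tuples $(\bar p,\bar q)$, obtaining the same four families of integrals $A_{\bar p\bar q}=\int T^{P(\bar p)}f\cdot T^{P(\bar q)}f\,d\mu$, together with $B_{\bar p\bar q}$, $C_{\bar p\bar q}$, $D_{\bar p\bar q}$ defined exactly as the terms following (\ref{e2}). The crucial cancellation is identical to that case: if $P(\bar p)-P(\bar q)>m$, then $T^{P(\bar p)-P(\bar q)}f$ is $T^{-(P(\bar p)-P(\bar q))}\mathscr{P}$-measurable, hence $T^{-m}\mathscr{P}$-measurable because $T^{-1}\mathscr{P}\subseteq\mathscr{P}$ makes $\{T^{-n}\mathscr{P}\}$ decreasing; pulling this factor out of the conditional expectation over $T^{-m}\mathscr{P}$ and using $T$-invariance gives $A_{\bar p\bar q}=C_{\bar p\bar q}$ and $B_{\bar p\bar q}=D_{\bar p\bar q}$, with the roles symmetric when $P(\bar q)-P(\bar p)>m$. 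Consequently every pair with $|P(\bar p)-P(\bar q)|>m$ contributes zero, and only the ``diagonal band'' $|P(\bar p)-P(\bar q)|\leq m$ survives.

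It remains to show this band is negligible, and this is the step I expect to be the main obstacle, since over the primes there is no Føl­ner/Banach-density tool to play the role that Lemma \ref{density} played in Theorem \ref{characteristic}. I would resolve it by applying the displayed count $|E(N)|\leq\pi(N)^{s-1}(t_1-t_0)m$ to the polynomial $Q(\bar p,\bar q):=P(\bar p)-P(\bar q)$ in the $2s$ variables $(\bar p,\bar q)$: this $Q$ is non-constant and still of degree $m$, so with $t_0=-m-1$ and $t_1=m+1$ it yields
$$\big|\{(\bar p,\bar q)\in\p^{2s}\cap[1,N]^{2s}:|P(\bar p)-P(\bar q)|\leq m\}\big|\leq\pi(N)^{2s-1}(2m+2)m.$$
Since $\Vert f^m\Vert_\infty\leq\Vert f\Vert_\infty\leq1$ because conditional expectation is an $L^\infty$-contraction, each of $A_{\bar p\bar q},B_{\bar p\bar q},C_{\bar p\bar q},D_{\bar p\bar q}$ has absolute value at most $1$, so the surviving pairs contribute at most $4\pi(N)^{-2s}\cdot\pi(N)^{2s-1}(2m+2)m=4(2m+2)m/\pi(N)$, which tends to $0$. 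For $N$ large this forces the squared norm below $\varepsilon^2$, and together with the earlier triangle-inequality step the full expression is $O(\varepsilon)$; letting $\varepsilon\to0$ settles the $\mathscr{P}$-measurable case, after which the $T$-invariance and density arguments complete the proof.
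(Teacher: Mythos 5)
Your proposal is correct and follows essentially the same route as the paper: the paper gives no separate proof of Theorem \ref{P-characteristic}, stating only that it follows the proof of Theorem \ref{characteristic} once the displayed level-set count $\vert E(N)\vert\leq\pi(N)^{s-1}(t_1-t_0)m$ is substituted for the Banach-density argument, and that substitution (applied to $Q(\bar p,\bar q)=P(\bar p)-P(\bar q)$ in $2s$ variables) is exactly what you carry out. The Rokhlin--Sinai reduction, the martingale step, the $A,B,C,D$ cancellation off the band $|P(\bar p)-P(\bar q)|\leq m$, and the final passage from $\mathscr{P}$-measurable to general $f\in L^\infty(\mu)$ all match the paper's template.
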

\begin{lem}\label{P-constant}Let $(X,\mathscr{B}_X,\mu,T)$ be a measure preserving system, $P_{\mu}(T)$ be its Pinsker $\sigma$-algebra and $\mu=\int\mu_xd\mu(x)$ be the disintegration of $\mu$ over $P_{\mu}(T)$. If $\mu=\int\tau_xd\mu(x)$ is the disintegration of $\mu$ as in the Proposition \ref{P-disintegration}, then for every $f\in C(X)$ one has
$$\int f\tau_x=\int\int f\tau_yd\mu_x(y)$$
for $\mu$-a.e. $x\in X$.
\end{lem}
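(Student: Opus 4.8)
The plan is to follow the proof of Lemma \ref{tauup-mu} line by line, substituting the prime averages for the F$\phi$lner averages and the prime analogues, Proposition \ref{P-disintegration} and Theorem \ref{P-characteristic}, for Proposition \ref{disintegration} and Theorem \ref{characteristic}. Fix $f\in C(X)$. Proposition \ref{P-disintegration} supplies a sequence $\{N_i\}_{i=1}^{\infty}$ and a set $X_0$ with $\mu(X_0)=1$ such that
$$\frac{1}{\pi(N_i)^s}\sum_{\substack{1\leq p_1,\dotsc,p_s\leq N_i\\ p_j\in\p}} f(T^{P(p_1,\dotsc,p_s)}x)\longrightarrow\int f\,d\tau_x$$
for every $x\in X_0$. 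As these averages are bounded by $\Vert f\Vert_{\infty}$, the dominated convergence theorem upgrades the pointwise convergence to convergence in $L^2(\mu)$.

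Next I would apply Theorem \ref{P-characteristic} with the same $f$: the prime average of $T^{P(\cdot)}f$ and the prime average of $T^{P(\cdot)}E(f|P_{\mu}(T))$ differ by a quantity tending to $0$ in $L^2(\mu)$ as $N\to\infty$, hence also along the subsequence $\{N_i\}$. Combining this with the previous display gives
$$\frac{1}{\pi(N_i)^s}\sum_{\substack{1\leq p_1,\dotsc,p_s\leq N_i\\ p_j\in\p}} E(f|P_{\mu}(T))(T^{P(p_1,\dotsc,p_s)}x)\longrightarrow\int f\,d\tau_x$$
in $L^2(\mu)$.

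The decisive observation is that every function on the left is $P_{\mu}(T)$-measurable. Indeed, by Theorem \ref{Rokhlin-Sinai} the Pinsker $\sigma$-algebra satisfies $T^{-1}P_{\mu}(T)=P_{\mu}(T)$, so each composition $E(f|P_{\mu}(T))\circ T^{P(p_1,\dotsc,p_s)}$, and thus each finite average, is $P_{\mu}(T)$-measurable; since $L^2$ limits of $P_{\mu}(T)$-measurable functions remain $P_{\mu}(T)$-measurable, the function $x\mapsto\int f\,d\tau_x$ is $P_{\mu}(T)$-measurable. A $P_{\mu}(T)$-measurable function coincides with its own conditional expectation, so using the disintegration identity \eqref{eq:Ef-mux} over $P_{\mu}(T)$ we obtain
$$\int f\,d\tau_x=E\!\left(\int f\,d\tau_y\,\Big|\,P_{\mu}(T)\right)(x)=\int\!\!\int f\,d\tau_y\,d\mu_x(y)\qquad\text{for }\mu\text{-a.e. }x,$$
which is the asserted identity. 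I do not foresee a genuine obstacle: each step is the prime-number transcription of a move already made for Lemma \ref{tauup-mu}, and the only point deserving a word of care is that the full-sequence $L^2$ convergence furnished by Theorem \ref{P-characteristic} may be freely restricted to the subsequence $\{N_i\}$ produced by Proposition \ref{P-disintegration}, which is immediate.
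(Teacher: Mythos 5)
Your proposal is correct and is essentially the paper's argument: the paper gives no separate proof of Lemma \ref{P-constant}, stating only that it follows the proof of Lemma \ref{tauup-mu}, and your transcription (pointwise convergence from Proposition \ref{P-disintegration}, comparison via Theorem \ref{P-characteristic}, $P_{\mu}(T)$-measurability of the $L^2$ limit, then the conditional-expectation identity) is exactly that proof with the prime averages substituted. You in fact supply slightly more detail than the paper does, e.g.\ the dominated-convergence upgrade to $L^2$ and the invariance $T^{-1}P_{\mu}(T)=P_{\mu}(T)$.
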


The proof of Theorem \ref{mainthm2} can be obtained by following the proof of Theorem \ref{mainthem1} and combining with the above results.

\bibliographystyle{amsplain}

\end{document}